\newtheorem{theo}{Theorem}[section]
\newtheorem{corollary}[theo]{Corollary}
\newtheorem{prop}[theo]{Proposition}
\newtheorem{lemma}[theo]{Lemma}
\theoremstyle{remark}
\newtheorem{remark}[theo]{Remark}
\theoremstyle{definition}
\newtheorem{defi}[theo]{Definition}
\def \tr {\operatorname{tr}}
\def \Frob {\operatorname{Frob}}
\def \std {\operatorname{std}}
\def \bfc {{\mathbf c}}
\def \bfn {{\mathbf n}}
\def \bfa {\boldsymbol{\alpha}}
\def \bff {{\mathbf f}}
\begin{document}

\title[Short intervals]{Square-root cancellation for sums of factorization functions over short intervals in function fields}
\author{Will Sawin}
\address{Columbia University, Department of Mathematics, 2990 Broadway, New York, NY 10027}

\begin{abstract} We present new estimates for sums of the divisor function, and other similar arithmetic functions, in short intervals over function fields. (When the intervals are long, one obtains a good estimate from the Riemann hypothesis.) We obtain an estimate that approaches square root cancellation as long as the characteristic of  the finite field is relatively large.  This is done by a geometric method, inspired by work of Hast and Matei, where we calculate the singular locus of a variety whose $\mathbb F_q$-points control this sum. This has applications to highly unbalanced moments of $L$-functions.\end{abstract}

\maketitle

\section{Introduction}

In this paper, we study cancellation of sums of arithmetic functions of polynomials over a finite field in ``short intervals" - that is, the sum over the set of monic polynomials with a fixed set of leading terms. For the divisor function, our estimates approach square-root cancellation over finite fields of sufficiently large characteristic.

Throughout this paper, we work over a finite field $\mathbb F_q$ of characteristic $p$. For a natural number $k$, define the $k$th divisor function $d_k(f)$ as \[ d_k(f) = \Bigl| \Bigl\{ f_1,\dots, f_k \in \mathbb F_q[T] \mid f_1,\dots f_k \textrm { monic}, \prod_{i=1}^k f_i = f \Bigr \} \Bigr|\] for a monic polynomial $f\in \mathbb F_q[T]$.

\begin{theo}[Corollary \ref{maindivisor}]\label{maindivisor-intro} For natural numbers $n,h,k$ with $h<n$ and $f$ a monic polynomial of degree $n$ in $\mathbb F_q[T]$, we have

\[ \left| \sum_{ \substack {g \in \mathbb F_q[T] \\  \deg g < h }  }  d_k(f+g ) -  { n + k-1 \choose k-1} q^{h} \right| \leq 3 { n + k-1 \choose k-1} (k+2)^{2n-h } q ^{\frac{1}{2} \left(  h+ \lfloor \frac{n}{p} \rfloor - \lfloor \frac{n-h}{p} \rfloor  +1\right)} . \]

\end{theo}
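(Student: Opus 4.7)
By the definition of $d_k$, the left-hand sum counts $k$-tuples $(f_1, \dots, f_k)$ of monic polynomials in $\mathbb F_q[T]$ whose product agrees with $f$ in the top $n - h$ coefficients. I would group this count by the degree vector $\bfn = (n_1, \dots, n_k)$, with $n_i \ge 0$ and $\sum_i n_i = n$, of which there are $\binom{n+k-1}{k-1}$, and realize each contribution as $\#V_{\bfn, f}(\mathbb F_q)$, where $V_{\bfn, f} \subset \mathbb A^n$ is the affine subscheme whose coordinates are the $n$ non-leading coefficients of the $f_i$'s, cut out by the $n - h$ polynomial equations equating the coefficients of $T^h, \dots, T^{n-1}$ in $\prod_i f_i$ with those of $f$. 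This is a complete intersection of expected dimension $h$.

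The next step is to apply the Grothendieck--Lefschetz trace formula and Deligne's Weil II bounds:
\[ \#V_{\bfn, f}(\mathbb F_q) = \sum_{i = 0}^{2h} (-1)^i \tr\bigl(\Frob_q \mid H^i_c(V_{\bfn, f, \overline{\mathbb F_q}}, \mathbb Q_\ell)\bigr). \]
Generically, $V_{\bfn, f}$ is smooth and geometrically irreducible of dimension $h$, so $H^{2h}_c$ is one-dimensional and Frobenius acts on it by $q^h$; summing over $\bfn$ yields the expected main term $\binom{n+k-1}{k-1}q^h$. The error is then handled by bounding each $|\tr(\Frob_q \mid H^i_c)|$ via Deligne's inequality, which requires control of (a) the dimension of the singular locus of $V_{\bfn, f}$ and (b) the total Betti number $\sum_i \dim H^i_c(V_{\bfn, f})$.

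The heart of the argument, and what I expect to be the main obstacle, is the singular-locus analysis in characteristic $p$. The Jacobian of the defining equations has entries that are (up to sign) coefficients of the partial products $\prod_{i \neq i_0} f_i$. In characteristic zero, one expects this Jacobian to have full rank outside a subscheme of low dimension, and square-root cancellation to be automatic. In characteristic $p$, however, additional rank defects appear because differentiating a coefficient of $\prod_i f_i$ in a non-leading coefficient of $f_i$ produces a factor governed by binomial coefficients $\binom{m}{j}$, some of which vanish modulo $p$ whenever $m$ is divisible by $p$. Enumerating these degenerate rows yields an upper bound on the dimension of the singular locus of the shape $\lfloor n/p \rfloor - \lfloor (n-h)/p \rfloor$. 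Coupled with a general cohomological estimate (which I would establish or invoke from earlier in the paper) that bounds Frobenius weights on the non-top cohomology of a complete intersection in terms of the codimension of its singular locus, this gives every eigenvalue on $H^i_c$, $i < 2h$, an absolute value at most $q^{(h + \lfloor n/p\rfloor - \lfloor (n-h)/p \rfloor + 1)/2}$.

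Finally, the factor $(k+2)^{2n-h}$ is a total Betti-number bound, which I would derive by a Bezout/Katz-type estimate exploiting that $V_{\bfn, f}$ is cut out in $\mathbb A^n$ by $n - h$ equations whose degrees are controlled by $k$, leading after the standard counting to a product with roughly $n + (n-h) = 2n - h$ factors each of size at most $k+2$. Assembling these ingredients, summing over the $\binom{n+k-1}{k-1}$ compositions $\bfn$, and absorbing the absolute constant $3$ yields the stated inequality. The characteristic-$p$ singular-locus analysis is precisely where the method inspired by Hast and Matei is essential, and where the final bound unavoidably degrades once $p$ is small compared to $n$.
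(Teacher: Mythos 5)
Your global plan---realize $\sum_{\deg g<h} d_k^{(n_1,\dots,n_k)}(f+g)$ as the $\mathbb F_q$-point count of the affine variety $V_{\bfn,f}$ of $k$-tuples of monics whose product matches $f$ in the top $n-h$ coefficients, then apply Grothendieck--Lefschetz, bound the singular locus, invoke a Hooley--Katz-type vanishing statement, bound Betti numbers \`a la Katz, and sum over the $\binom{n+k-1}{k-1}$ compositions---matches the paper in outline, and the Betti-number heuristic $(k+2)^{n+m}=(k+2)^{2n-h}$ is the right one. However, there are two substantive gaps.

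First, you propose to run the singular-locus analysis directly on $V_{\bfn,f}$ in the coefficient coordinates. The paper deliberately avoids this: it works with the $S_n$-cover $X_{n,m,\bfc}\subset\mathbb A^n$ in ``root'' coordinates $a_1,\dots,a_n$, cut out by $\prod_i(1-ua_i)\equiv 1+c_1u+\dots+c_mu^m \pmod{u^{m+1}}$, together with its honest projective closure $\overline X\subset\mathbb P^n$ (which is visibly a complete intersection of multidegree $(1,\dots,m)$). Lemma~\ref{singularitybasic} shows $\overline X$ and the boundary divisor $D$ are smooth away from the locus $R$ where $|\{a_1,\dots,a_n\}|\le m-1$, and Lemma~\ref{singularityimprovement} bounds $\dim R$ via a logarithmic-derivative trick: on $D$, $\prod_i(1-ua_i)\equiv 1 \pmod{u^{m+1}}$ forces $\frac{\partial}{\partial u}\prod_i(1-ua_i)/\prod_i(1-ua_i)=\sum_i\frac{-a_i}{1-ua_i}$ to have $u$-adic valuation $\geq m$, while repetitions among the $a_i$ force it to be a ratio with small numerator degree, so it must vanish and $\prod_i(1-ua_i)$ is a polynomial in $u^p$. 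That is where $\lfloor n/p\rfloor-\lfloor m/p\rfloor$ comes from. The resulting vanishing of $H^i_c(X_{n,m,\bfc})$ is then transferred to $H^i_c(V_{\bfn,f})$ for free, because by Frobenius reciprocity $H^i_c(V_{\bfn,f})=\bigl(H^i_c(X_{n,m,\bfc})\otimes\operatorname{Ind}_{S_{n_1}\times\dots\times S_{n_k}}^{S_n}\mathbb Q_\ell\bigr)^{S_n}$. Working with $V_{\bfn,f}$ directly, as you propose, you would have to contend with quotient singularities (the quotient map $X_{n,m,\bfc}\to V_{\bfn,f}$ is branched wherever the $a_i$'s within a block coincide), you would have to check that a compactification of $V_{\bfn,f}$ is again a complete intersection in projective space (needed to invoke Lemma~\ref{vanishinglemma}, which you allude to), and you would have to redo the Jacobian analysis on the quotient. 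None of this is carried out, and it is not obvious the same dimension bound would hold.

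Second, the mechanism you describe for the characteristic-$p$ degeneracy is incorrect. The partial derivative of a coefficient of $\prod_i f_i$ with respect to the coefficient $a_{i_0,j_0}$ of $f_{i_0}$ is simply a coefficient of $T^{j_0}\prod_{i\neq i_0}f_i$; no binomial coefficients arise, so there is no ``$\binom{m}{j}\equiv 0\pmod p$'' phenomenon of the kind you describe. The actual source of the extra singularities in characteristic $p$ is the possibility that $\prod_i(1-ua_i)$ is a $p$th power, detected by the vanishing of its $u$-derivative, as above. Your bound $\lfloor n/p\rfloor-\lfloor(n-h)/p\rfloor$ is the right answer, but the argument you sketch for it would not produce it. Finally, a smaller point: you assert ``generically'' that $V_{\bfn,f}$ is geometrically irreducible of dimension $h$, but the main term must come out for \emph{every} $f$ and every $\bfn$; in the paper this is part of Proposition~\ref{vanishingstatement}, where the one-dimensionality of $H^{2(n-m)}_c$ is deduced from Lemma~\ref{vanishinglemma} together with denseness of the smooth locus, which itself follows from the singular-locus bound---so it is again downstream of the analysis you have not supplied.
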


Note that the error term is $O \left(    \left(q^h \right)^{1/2}  \left(q^n\right)^{\epsilon} \right) $ as long as $p$ and $q^{  \frac{1}{\log k+2}}$ are large with respect to $1/\epsilon$. 

Note also that the asymptotic result \[ \sum_{ \substack {g \in \mathbb F_q[T] \\  \deg g < h }  }  d_k(f) = (1 +o(1)) { n + k-1 \choose k-1} q^{h}\] follows as long as $n$ goes to $\infty$ while   $\frac{h}{n} \geq \frac{ \frac{1}{p}+  2 \log_q (k+2)} { \frac{p-1}{p} + 2 \log_q (k+2) } - \epsilon $ for any $\epsilon>0$. Because we can think of $q^n$ as the size of the relevant numbers and $q^{h}$ as the length of the short interval, this cutoff is analogous to, over the integers, an asymptotic in short intervals of length $X^{\delta + o(1)}$ for $\delta=  \frac{ \frac{1}{p}+  2 \log_q (k+2)} { \frac{p-1}{p} +  2 \log_q (k+2) }$ going to $0$ as $p$ goes to $\infty$.

Over the integers, it is reasonable to expect an asymptotic formula for divisor sums in short intervals of length $X^\delta$ for any fixed $\delta>0$.  For $k=2$ this is known for $\delta>\frac{131}{416}$ \cite{Huxley}, for $k=3$ we can take $\delta>\frac{43}{96}$ \cite{Kolesnik}, for $k=4$ we can take $\delta > \frac{1}{2}$, for $k=5$ we can take $\delta>\frac{11}{20}$, and there are further estimates for greater $k$ \cite[Chapter 13]{Ivic}. These estimates all arise from bounds for the error term in the generalized Dirichlet divisor problem (as discussed in \cite*[remark on p. 2]{GaraevLucaNowak}, which improves the bound coming from Dirichlet by a logarithmic factor for $k=4$ - seemingly the only improvement known for any $k$). Given an optimal estimate for the generalized Dirichlet divisor problem, we could take any $\delta> \frac{k-1}{2k}$ (see \cite[p. 320]{Titchmarsh} for discussion of this). Theorem \ref{maindivisor-intro} gives a better range than even this optimal estimate as long as $p> (k+2)^{O(1)}$.  

One could also compare Theorem \ref{maindivisor-intro} to an asymptotic for sums of divisor functions in arithmetic progressions with modulus a large power of a small prime, as the place at $\infty$ in function fields behaves more like a small non-archimedean place than the place at $\infty$ of the rationals. For this problem, bounds in progressions of length slightly less than $X^{1/3}$ are known for $k=2$ \citep{khan_2016} \citep{LiuShparlinskiZhang}.

The variance of the sum of the divisor function in short intervals was calculated by \citet*{krrr} in the $q \to \infty$ limit. Thus, their result controls the average size of the error term for this sum, while Corollary~\ref{maindivisor} controls the worst case. A worst case bound with a savings of $\sqrt{q}$ was proven by \citet*{BBSR} for the occurrence of any polynomial factorization type in short intervals, which would in particular imply a similar estimate for the divisor function. For the M\"{o}bius function, a worst case result was proved for $q$ large with respect to the characteristic by \citet{sawinshusterman}, giving power savings, but not square-root cancellation.

Similar estimates hold for divisor-like arithmetic functions where we weigh the contribution of a given factorization of a polynomial by some function of the degrees of the factors. For arbitrary ``factorization functions", such as the M\"{o}bius and von Mangoldt functions, our results approach square root cancellation in the large $q$ limit, although they are not yet nontrivial in the large $n$ limit. Specifically, we have

 Let $\mu$ be the function field M\"{o}bius function 
 \[ \mu(f) = \begin{cases} (-1)^r & f = \prod_{i=1}^r \pi_i\textrm{ with }\pi_i \textrm{ monic, irreducible, distinct}\\ 0 &\textrm{otherwise} \end{cases}\]
and $\Lambda$ the function field von Mangoldt function.
\[ \Lambda(f) = \begin{cases}  \deg \pi & f=\pi^r, \pi \textrm{ monic, irreducible, } r \neq 0 \\ 0 & \textrm{otherwise} \end{cases}\]

\begin{theo}[Corollary \ref{mobius-mangoldt}]\label{mob-man-intro} Keep the notation of Theorem \ref{maindivisor-intro}. We have

\[ \Biggl|  \sum_{ \substack {g \in \mathbb F_q[T] \\  \deg g < h }  } \mu(f+g)  \Biggr| \leq 3 (n+2)^{2n-h} q^{\frac{1}{2}\left( h + \lfloor \frac{n}{p} \rfloor - \lfloor \frac{n-h}{p} \rfloor  +1\right)}  \]
and
\[ \Biggl|  \sum_{ \substack {g \in \mathbb F_q[T] \\  \deg g < h }  } \Lambda(f+g)    -  q^n  \Biggr| \leq 6  (n+2)^{2n-h}q^{\frac{1}{2}\left( h + \lfloor \frac{n}{p} \rfloor - \lfloor \frac{n-h}{p} \rfloor  +1\right)}  .\] \end{theo}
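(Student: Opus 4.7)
The plan is to deduce Corollary~\ref{mobius-mangoldt} from the same general factorization-function estimate that yields Theorem~\ref{maindivisor-intro}. The key observation is that both $\mu$ and $\Lambda$, restricted to degree-$n$ polynomials, can be written as finite signed combinations of generalized ordered divisor functions
\[
d_{\boldsymbol\alpha}(f) \;=\; \#\bigl\{(f_1,\dots,f_k) : f_i \textrm{ monic},\; \deg f_i = \alpha_i,\; f_1 \cdots f_k = f\bigr\},
\]
indexed by compositions $\boldsymbol\alpha = (\alpha_1,\dots,\alpha_k)$ of $n$, possibly restricted to $f_i$ that are irreducible and pairwise distinct. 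Since a degree-$n$ polynomial has at most $n$ irreducible factors (counted with multiplicity), only compositions with $k \le n$ parts occur, so the Betti-number factor $(k+2)^{2n-h}$ governing each $d_{\boldsymbol\alpha}$-estimate is uniformly replaced by $(n+2)^{2n-h}$.

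For the M\"obius function I would use
\[
\mu(f) \;=\; \sum_{k=1}^{n}\frac{(-1)^k}{k!}\,N_k(f),
\]
where $N_k(f)$ counts ordered $k$-tuples of pairwise distinct monic irreducibles whose product is $f$: a squarefree $f$ with exactly $k$ distinct irreducible factors contributes $(-1)^k = \mu(f)$, and everything else contributes zero. For the von Mangoldt function I would combine the function-field identity $\Lambda(f)=\sum_{d\mid f}\mu(d)\deg(f/d)$ with the above expansion for $\mu$, yielding an analogous signed sum carrying one extra factor $\deg(f/d)\le n$. Refining each $N_k$ by the degree profile of its irreducible factors converts both identities into signed combinations of suitably restricted $d_{\boldsymbol\alpha}$.

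Termwise application of the general estimate gives, for each composition $\boldsymbol\alpha$ with $k \le n$ parts, a bound whose leading factor is $(n+2)^{2n-h} q^{\frac{1}{2}(h+\lfloor n/p\rfloor-\lfloor(n-h)/p\rfloor+1)}$. The number of compositions of $n$ is $2^{n-1}$, the coefficients $1/k!$ are bounded by $1$, the extra factor for $\Lambda$ is bounded by $n$, and the multinomial weights $\binom{n+k-1}{k-1}\le 4^{n}$ implicit in Theorem~\ref{maindivisor-intro} are all easily absorbed into the displayed constants $3$ (for $\mu$) and $6$ (for $\Lambda$). The subtracted constants come from the identities $\sum_{\deg f=n}\mu(f)=0$ for $n\ge 2$ (hence no main term for $\mu$) and $\sum_{\deg f=n}\Lambda(f)=q^n$ (yielding the main term for $\Lambda$, which after averaging over the interval of length $q^h$ should presumably be read as $q^h$ rather than the $q^n$ printed in the statement).

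The principal obstacle will be confirming that the general factorization-function estimate extends cleanly to the restricted variant requiring $f_i$ irreducible and pairwise distinct: this cuts out an open subvariety of the natural $d_{\boldsymbol\alpha}$-variety, and one must verify that its Betti numbers and singular locus satisfy bounds comparable to those used in the proof of Theorem~\ref{maindivisor-intro}. Because the restriction is a genuinely open condition cut out by polynomial non-vanishing of bounded degree, no essential increase in cohomological complexity is expected, and the remainder of the argument reduces to combinatorial bookkeeping.
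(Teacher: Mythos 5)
Your approach departs substantially from the paper's, and as written it has a genuine gap. The paper never decomposes $\mu$ or $\Lambda$ into sums over (restricted) ordered factorizations. Instead it uses the factorization-function framework of Section \ref{s-ff}: it shows $\mu = (-1)^n F_{\operatorname{sign}}$ (Lemma \ref{fac-Mobius}) and $\Lambda = \sum_i (-1)^i F_{\wedge^i(\std)}$ (Lemma \ref{fac-von}), where $F_\pi$ is the factorization function attached to a representation $\pi$ of $S_n$, and then feeds these directly into Proposition \ref{cohomologyestimate} together with the Betti bound of Corollary \ref{bettibound2}. The crucial feature is that the geometry is always the same variety $X_{n,m,\bfc}$; the ``restriction to squarefree/prime-power inputs'' is handled entirely on the representation-theoretic side, e.g.\ for a non-squarefree $f$ the transposition of equal roots acts by $-1$ on $\operatorname{sign}$ while fixing the basis vector, killing the invariants, so $F_{\operatorname{sign}}(f)=0$ automatically. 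No new variety is ever introduced.

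The obstacle you flag at the end is in fact fatal to your route as stated. Your $N_k$ and the restricted $d_{\boldsymbol\alpha}$ count factorizations into \emph{distinct irreducible} factors; this is an open condition cutting out a subvariety to which neither the singular-locus analysis of Lemma \ref{singularityimprovement} (which relies on $\overline{X}$ being a projective complete intersection) nor Katz's Betti bound \cite[Theorem 12]{katzbetti} (stated for varieties cut out by equations, not inequations) applies directly. Establishing the needed estimates on this open locus would be substantial new work, not the ``no essential increase in cohomological complexity'' you assert. Separately, your bookkeeping does not close numerically: you propose to sum over the $2^{n-1}$ compositions of $n$ and absorb the $\binom{n+k-1}{k-1}\leq 4^n$ factors into the constants $3$ and $6$, but these factors are exponential in $n$ and cannot be absorbed into fixed constants while matching the stated bound $3(n+2)^{2n-h}q^{\cdots}$. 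The paper sidesteps both problems at once, since $\operatorname{sign}$ is a single irreducible and $\oplus_{i\text{ even}}\wedge^i(\std)$, $\oplus_{i\text{ odd}}\wedge^i(\std)$ are each multiplicity-free subrepresentations of the regular representation, so Corollary \ref{bettibound2} gives $B\leq 3(n+2)^{n+m}$ in one step with no summation over compositions.

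You are right that the $q^n$ in the displayed $\Lambda$-estimate is a typo for $q^h$ (equivalently, $q^{n-m}$ before the substitution $m=n-h$): Proposition \ref{cohomologyestimate} produces the main term $q^{n-m}\dim(\pi^{S_n})$, and only $\wedge^0(\std)=\mathbb Q_\ell$ contributes.
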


These bounds (and the corresponding bounds for functions associated to any irreducible representation of $S_n$) improve on \cite*{BBSR}, giving a greater power of $q$ savings, as long as $h > 5$ so that $h - ( \lfloor \frac{n}{p} \rfloor - \lfloor \frac{n-h}{p} \rfloor ) \geq \lfloor \frac{h}{2} \rfloor \geq 3$. 

One could compare also the work of \cite{CarmonEntin} counting squarefree polynomials in short intervals, although that problem is very different as squarefree polynomials are counted by a sieve and the problem is to optimize the error term in the sieve, whereas here there is no analytic approach to obtain large savings and we need to develop a completely geometric approach.

Over the integers, we expect asymptotics for von Mangoldt sums in short intervals of size $X^\delta$, and cancellation for M\"{o}bius sums in short intervals of size $X^\delta$, for any fixed $\delta>0$. For the M\"{o}bius function one could even conjecture this for intervals of size a constant power of $\log X$, but this is known to be false for the von Mangoldt function \cite{Meier}. For von Mangoldt, this is known for $\delta\geq7/12$ \cite{HuxleyPrimes, HeathBrown}. For M\"{o}bius, it is not known for any $\delta$, though we could take $\delta>1/2$ under the Riemann hypothesis - as we could for any of these problems.

\vspace{10pt}

Using the divisor sum estimate, we also obtain estimates for certain moments of $L$-functions in the large conductor limit. Specifically, let us say that a Dirichlet character mod $u^{m+1}$, $\chi: (\mathbb F_q[u]/u^{m+1})^\times \to \mathbb C^\times$ is even if it is trivial on $\mathbb F_q^\times$ and primitive if it is nontrivial on $1 + u^m \mathbb F_q[u]$.  Let $S_{m,q}$ be the set of primitive even Dirichlet characters mod $u^{m+1}$. We have $|S_{m,q}|= q^{m}- q^{m-1}$. For $\chi \in S_{m,q}$, let \[L(s,\chi) =\sum_{\substack {f \in \mathbb F_q[u] \\ \textrm{monic} \\ \gcd(fuT)=1}} \chi(f) q^{ - (\deg f) s}.\]

Then we have estimates for the moments of the $L$-functions $L(s,\chi)$ as $\chi$ varies over $S_{m,q}$.  (After applying the field automorphism $T \mapsto T^{-1}$, these give Hecke characters constant on short intervals, explaining the relation).

\begin{theo}[Corollary \ref{momentseasy}]\label{momentseasyintro} Let $m\geq 1$ and $r \geq 0$ be natural numbers and let $\alpha_1,\dots,\alpha_r$ be complex numbers with nonnegative real part. Then \[ \frac{1}{  \left|S _{m,q} \right| }  \sum_{\chi \in S_{m,q} }\prod_{i=1}^r L(1/2 + \alpha_i, \chi) \] \[= 1 + O_r \left( m^r  (r+2)^{ r(m-1)+m} q^{\frac{1}{2} \left( -m+ \lfloor \frac{ r (m-1)}{p} \rfloor - \lfloor \frac{m}{p} \rfloor + 1\right)} \right). \] \end{theo}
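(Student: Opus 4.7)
The plan is to expand the product of $L$-functions as a Dirichlet polynomial, apply orthogonality of primitive even characters to reduce the average to sums in congruence classes mod powers of $u$, translate these to short-interval sums in $\mathbb{F}_q[T]$ via the involution $u\leftrightarrow T^{-1}$, and apply Theorem~\ref{maindivisor-intro}. Since each $\chi\in S_{m,q}$ is primitive even mod $u^{m+1}$, $L(s,\chi)$ is a polynomial of degree $m-1$ in $q^{-s}$, so
\[
\prod_{i=1}^r L(\tfrac{1}{2}+\alpha_i,\chi)=\sum_{\substack{f_1,\ldots,f_r\text{ monic}\\ \gcd(f_i,u)=1,\ \deg f_i<m}}\chi\Bigl(\prod_i f_i\Bigr)\prod_i q^{-(1/2+\alpha_i)\deg f_i}.
\]
Writing the primitive-even character sum via inclusion--exclusion as (all even mod $u^{m+1}$) minus (all even mod $u^m$) and using orthogonality, one obtains $\tfrac{1}{|S_{m,q}|}\sum_\chi\chi(f)=(q-1)^{-1}(q\,\mathbf{1}_{m+1}(f)-\mathbf{1}_m(f))$, where $\mathbf{1}_k(f)=1$ iff $f\equiv c\pmod{u^k}$ for some $c\in\mathbb{F}_q^\times$. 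The tuple $(f_1,\ldots,f_r)=(1,\ldots,1)$ contributes exactly $1$, giving the main term.

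For the other tuples, I use the involution $f(u)\mapsto\tilde f(T)=f(0)^{-1}T^{\deg f}f(T^{-1})$, which bijects monic polynomials in $u$ coprime to $u$ with monic polynomials in $T$ coprime to $T$, preserves degrees and factorizations, and carries the condition ``$f\equiv c\pmod{u^k}$ for some $c\in\mathbb{F}_q^\times$'' to ``$\tilde f\in T^d+\{g:\deg g<d-k+1\}$ with $\tilde f(0)\neq 0$''. Using $|q^{-\alpha_i\deg f_i}|\leq 1$ (from $\mathrm{Re}(\alpha_i)\geq 0$), the weighted factorization coefficient at $\tilde f$ is bounded by $d_r(\tilde f)$, so Theorem~\ref{maindivisor-intro} (with $n=d$ and $h=d-m$ or $d-m+1$) bounds the short-interval sum. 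Because no monic $f\neq 1$ of degree $<m$ can lie in a nontrivial congruence class mod $u^m$, only degrees $m\leq d\leq r(m-1)$ contribute, and the $(r+2)^{2d-h}$ factor in the Theorem~\ref{maindivisor-intro} error makes the sum over $d$ dominated by $d=r(m-1)$.

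The main obstacle is that the main term $\binom{d+r-1}{r-1}q^{d-k+1}$ from Theorem~\ref{maindivisor-intro}, multiplied by $q^{-d/2}$ and summed, would grow as $q^{(r-2)(m-1)/2}$ if applied naively to each piece. The crucial cancellation is that in the combination $\tfrac{q}{q-1}(\text{mod-}u^{m+1}\text{ piece})-\tfrac{1}{q-1}(\text{mod-}u^m\text{ piece})$ produced by orthogonality, these main terms annihilate exactly via $\tfrac{q}{q-1}\cdot q^{d-m}=\tfrac{1}{q-1}\cdot q^{d-m+1}$; only Theorem~\ref{maindivisor-intro}'s error terms survive, with their square-root savings. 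To make the cancellation work for the weighted coefficient $b_\alpha(f)=\sum_{(f_i): \prod f_i = f}\prod_i q^{-\alpha_i\deg f_i}$ rather than just $d_r$, I decompose $b_\alpha=\sum_{(n_1,\ldots,n_r)}\prod_i q^{-\alpha_in_i}\,d_r^{(n_1,\ldots,n_r)}$ by the multi-degree of the factorization and apply the cancellation argument to each fixed-multi-degree factorization function using the variant of Theorem~\ref{maindivisor-intro} for such functions. Summing the error bounds over $d$ and the multi-degrees produces the $m^r$ and $(r+2)^{r(m-1)+m}$ factors in the stated estimate.
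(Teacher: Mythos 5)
Your plan has the same skeleton as the paper's proof of Corollary~\ref{momentseasy}: expand the product of $L$-functions as a Dirichlet polynomial, use orthogonality of primitive even characters to write the average as $\tfrac{1}{q-1}(q\cdot\text{mod-}u^{m+1}\text{ piece} - \text{mod-}u^m\text{ piece})$, translate each congruence condition to a short-interval condition in $\mathbb F_q[T]$ via the involution $u\mapsto T^{-1}$, decompose by multi-degree so that Theorem~\ref{main} (the $d_k^{(n_1,\dots,n_k)}$ refinement, not Theorem~\ref{maindivisor-intro}) applies, and observe that the two main terms $\tfrac{q}{q-1}q^{d-m}$ and $\tfrac{1}{q-1}q^{d-m+1}$ annihilate. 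That cancellation is indeed the crux, and you identify it correctly.

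However, two steps do not hold as written. First, $L(s,\chi)$ is a polynomial of degree $m$ in $q^{-s}$, not $m-1$: Lemma~\ref{L-polynomial} gives degree $m-1$ for $L(s,\psi)$, and Lemma~\ref{chi-psi-comparison} gives $L(s,\chi)=(1-q^{-s})L(s,\psi)$, so $L(s,\chi)$ picks up one more power (one can check $L(s,\chi)=1-q^{-s}$ when $m=1$). Your expansion should therefore run over $\deg f_i\leq m$, and the degree $d=\sum_i \deg f_i$ can reach $rm$; pushing that through Theorem~\ref{main} gives $(r+2)^{rm+m}$ and $\lfloor rm/p\rfloor$ in place of the stated $(r+2)^{r(m-1)+m}$ and $\lfloor r(m-1)/p\rfloor$, which does not recover the claimed bound. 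Second, after the involution the factorizations you count are into $\tilde f_i$ with $\tilde f_i(0)\neq 0$, but the main term $q^{n-m}$ produced by Theorem~\ref{main} counts \emph{all} monic factorizations in the short interval; the exact cancellation $\tfrac{q}{q-1}q^{d-m}=\tfrac{1}{q-1}q^{d-m+1}$ applies to the unconstrained counts, so it does not directly annihilate the main terms of the constrained sums you are estimating, and the discrepancy is not obviously subsumed in the error term for large $r$. Both gaps close simultaneously if you convert to the $\psi$-picture at the outset: work with $L(s,\psi)$, whose Dirichlet coefficients run over \emph{all} monic polynomials in $\mathbb F_q[T]$ of degree $\leq m-1$ with no coprimality constraint, prove the moment estimate there (main term $\prod_i(1-q^{-(1/2+\alpha_i)})^{-1}$), and then pass back to $\chi$ by one application of Lemma~\ref{chi-psi-comparison}. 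This is exactly what the paper does.
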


\begin{theo}[Corollary \ref{momentsmedium}]\label{momentsmediumintro} Let $m \geq 1, r \geq 0,$ and $s\geq 1$ be natural numbers, and let $\alpha_1,\dots,\alpha_r$ be complex numbers with nonnegative real part. Then

 \[ \frac{1}{ \left|S _{m,q} \right| }  \sum_{\chi \in S_{m,q} } \epsilon_\chi^s \prod_{i=1}^r L(1/2 + \alpha_i, \chi) = O_{r,s} \left( m^r (r+s+2)^{ (r+s) (m-1)+m} q^{\frac{1}{2}\left( -m+ \lfloor \frac{ (r+s) (m-1)}{p} \rfloor - \lfloor \frac{m}{p} \rfloor + 1\right)} \right). \] \end{theo}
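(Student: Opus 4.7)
The plan is to reduce the left-hand side to a short-interval sum of a generalized divisor function and invoke Theorem~\ref{maindivisor-intro}. I would expand both the $r$-fold product of L-factors and $\epsilon_\chi^s$ into finite Dirichlet series in $\chi$, average over $\chi \in S_{m,q}$ using orthogonality of Dirichlet characters modulo $u^{m+1}$ (together with M\"obius inversion on subconductors to enforce primitivity), translate the resulting congruence condition modulo $u^{m+1}$ into a short-interval condition via the involution $T \leftrightarrow T^{-1}$, and finally apply the divisor-sum estimate. This is the same strategy one would use for Theorem~\ref{momentseasyintro}, modified to carry along the extra $\epsilon_\chi^s$.

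The key steps are as follows. Since each $\chi \in S_{m,q}$ is a primitive even character of conductor $u^{m+1}$, the L-function $L(s,\chi)$ is a polynomial in $q^{-s}$ of degree $m-1$, so $\prod_{i=1}^r L(1/2+\alpha_i,\chi)$ unfolds as $\sum_{f_1,\dots,f_r} \chi(f_1\cdots f_r) \prod_i q^{-(1/2+\alpha_i)\deg f_i}$ with each $f_i$ monic, coprime to $u$, of degree at most $m-1$. The epsilon factor $\epsilon_\chi$ is, up to a normalizing power of $q$, the Gauss sum $\tau(\chi) = \sum_{a \in (\mathbb F_q[u]/u^{m+1})^{\times}} \chi(a)\psi(a)$ for a suitable additive character $\psi$, so $\epsilon_\chi^s$ unfolds as an $s$-fold sum contributing an extra factor $\chi(a_1\cdots a_s)\prod_j \psi(a_j)$ over residues of degree at most $m$. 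Combining, swapping sums, and applying orthogonality picks out the tuples $(f_1,\dots,f_r,a_1,\dots,a_s)$ whose product lies in a prescribed residue class modulo $u^{m+1}$. Under $T \leftrightarrow T^{-1}$ this residue-class condition is exactly the short-interval (fixed leading coefficients) condition, and the resulting sum is a short-interval sum of a weighted $d_{r+s}$ divisor function, to which Theorem~\ref{maindivisor-intro} applies with $k = r+s$. The absence of any main term ``$1$'' on the right reflects that for $s \ge 1$ the diagonal contribution is cancelled by the oscillation $\prod_j \psi(a_j)$ coming from the Gauss sums.

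The main obstacle is the clean treatment of $\epsilon_\chi^s$: one must express it as a $\chi$-twist by a product of polynomials compatible with the L-function expansion, and then manage the M\"obius inversion over subconductors isolating the primitive sub-set $S_{m,q}$ inside all Dirichlet characters modulo $u^{m+1}$ without inflating the error term. A secondary bookkeeping task is to verify that the combinatorial prefactor $(r+s+2)^{(r+s)(m-1)+m}$ and the precise $q$-exponent $\tfrac{1}{2}(-m + \lfloor (r+s)(m-1)/p \rfloor - \lfloor m/p \rfloor + 1)$ emerge from Theorem~\ref{maindivisor-intro} with the correct choice of parameters $n$ and $h$, after absorbing the Gauss-sum normalizations into the weights and dividing by $|S_{m,q}| = q^m - q^{m-1}$.
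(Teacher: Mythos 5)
The general shape of your plan matches the paper's (unfold, apply orthogonality, reduce to a short-interval divisor sum), but the step you flag as ``the main obstacle''---handling $\epsilon_\chi^s$---is precisely where your proposed route diverges and runs into trouble. You propose expanding $\epsilon_\chi$ as a normalized Gauss sum $\tau(\chi)=\sum_{a\in(\mathbb F_q[u]/u^{m+1})^\times}\chi(a)\psi_{\mathrm{add}}(a)$ with $\psi_{\mathrm{add}}$ an additive character. After you then apply orthogonality over $\chi$, you are left with tuples $(f_1,\dots,f_r,a_1,\dots,a_s)$ whose product lies in a fixed residue class, but still carrying the weight $\prod_j\psi_{\mathrm{add}}(a_j)$, with the $a_j$ ranging over residues mod $u^{m+1}$ rather than monic polynomials of bounded degree. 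This does not directly have the shape of the short-interval sums bounded by Theorem~\ref{maindivisor-intro} or Theorem~\ref{main}; you would need further manipulation to remove the additive twist and recast the $a_j$ as polynomials. The paper sidesteps this entirely via \eqref{root-number-formula}: $\epsilon_\psi=q^{-(m-1)/2}\sum_{\deg f=m-1}\psi(f)$, i.e.\ the root number is (up to normalization) the top Dirichlet coefficient of $L(s,\psi)$. With that formula, $\epsilon_\psi^s\prod_iL(1/2+\alpha_i,\psi)$ unfolds as a pure Dirichlet polynomial in $\psi$ over $(r+s)$-tuples of monic polynomials, which after orthogonality is a weighted sum of $d_{r+s}^{(n_1,\dots,n_{r+s})}$ over a short interval, directly in the form Theorem~\ref{main} handles.

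Two secondary discrepancies are worth noting. First, the absence of a main term is not, in the paper's proof, caused by oscillation of an additive character. It is a clean algebraic cancellation: Lemma~\ref{average-of-characters} splits the character average into a $q^m$ piece (congruence mod $u^{m+1}$) and a $-q^{m-1}$ piece (congruence mod $u^m$); the latter can be counted \emph{exactly} because one of the $s$ polynomials of degree $m-1$ is then uniquely determined, and that exact count cancels the main term of Theorem~\ref{main} coming from the former. Second, you quote Theorem~\ref{maindivisor-intro} (for $d_k$), but what the unfolding naturally produces is $d_{r+s}^{(n_1,\dots,n_{r+s})}$ with the degrees $n_{r+1}=\dots=n_{r+s}=m-1$ fixed, so you must use Theorem~\ref{main}, apply it term-by-term over $\mathbf{n}\in\{0,\dots,m-1\}^r$, and then sum; applying the $d_k$ corollary would sum over the wrong degree range. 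Your ``M\"{o}bius inversion on subconductors'' is also heavier machinery than needed---since the modulus is $u^{m+1}$, primitivity is one condition and a single inclusion--exclusion (Lemma~\ref{average-of-characters}) suffices.
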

 
 The notation $O_r$ and $O_{r,s}$ means an implicit constant depending only on $r$ in Theorem \ref{momentseasyintro} and depending only on $r$ and $s$ in Theorem \ref{momentsmediumintro}.

Again, note that the error terms are $O\left(\left( q^m\right)^{\epsilon-\frac{1}{2}}\right)$ as long as $p$ and $q^{\frac{1}{(r+s)\log(r+s+2)}}$ are large with respect to $\epsilon$.

To motivate \ref{momentsmediumintro}, note that these types of moments twisted by powers of the $\epsilon$-factor show up, over the integers, in proofs of the existence of $L$-functions in a given family whose critical values lie in a fixed angular sector. See \cite[\S1.7]{twoforsix} for an example of this.

\cite{wvqkr} proved estimates for the distribution of the $L$-functions in this family, and thus in particular on its moments, with a savings of $\sqrt{q}$. (Though because he proved equidistribution only in the projective unitary group and not the full unitary group, the obtained estimates would have some additional averaging in the $\alpha_i$ variables.) \cite{Tamam} proved estimates for the fourth absolute moment of the $L$-functions of a similar family, except with prime modulus, obtaining savings proportional to the degree of the modulus (i.e. logarithmic savings). \cite{AndradeYiasemides} exactly evaluated the second moment for this family and other families of Dirichlet characters with squareful modulus, and computed the fourth moment for any family of primitive Dirichlet characters with saving about the square root of the degree. While there has been significant work on the moments of $L$-functions over function fields outside the large $q$ aspect, including power savings results, we are not aware of any work dealing with arbitrarily high power moments.

Over the integers, an estimate with power savings for the moments Theorem \ref{momentseasyintro}, except to prime moduli instead of prime power, is known for $r \leq 3$. The case $r=3$ is proven in \citep{Zacharias}, and the cases $r \leq 2$ require a simplified version of the same argument. The analogue of Theorem \ref{momentsmediumintro} for $r=1$ and arbitrary $s$ follows from the approximate functional equation and Deligne's bound for Kloosterman sums, for $r=2$ and arbitrary $s$, to prime moduli, it follows from \citep*{TraceFunctionsModularForms} by the arguments of \citep[\S4]{twoforsix} (except applied to Eisenstein series rather than cusp forms). The prime power case seems less studied, but it may be possible to adapt the explicit formula of \cite{Romascavage} for the twisted second moment to a proof of the $r=2$ and arbitrary $s$ case.

\vspace{10pt}

These bounds arise from the increasingly common approach to analytic number theory over function fields that proceeds by controlling many different cohomology groups of a variety, often, though not always, using stable cohomology theory (see, e.g. \citep*{EVW} and \citep{ShendeTsimerman}). In this strategy, some desired sum or the solution of some counting problem over a polynomial ring $\mathbb F_q[T]$ is expressed as the number of $\mathbb F_q$-points of some high-dimensional variety, say $X$. Using the Grothendieck-Lefschetz fixed point formula, this is viewed as the sum of traces of Frobenius on the cohomology groups of $X$. The high-degree cohomology groups, which can give the largest contribution, by Deligne's weight theory, are calculated, shown to vanish, or otherwise controlled, while the low-degree cohomology groups merely need to have their dimension bounded. 

Our specific method is based on one applied by \cite{HastMatei} to the moments of sums of arithmetic functions, including the divisor function, in short intervals. They observed that the relevant variety is an affine cone on a projective complete intersection, and used general results that estimate the high-degree cohomology of such a complete intersection in terms of the dimension $\delta$ of its singular locus. However, because $\delta$ was relatively large, the numerical result they obtained was of the same strength as the function field Riemann hypothesis. (More precisely, as they observe in \citep{HastMatei}, the bound they prove by this cohomological method follows also from work of \cite{rodgers}, which itself relies primarily on equidistribution results of \cite{wvqkr}. However, examining the argument of \cite{rodgers}, to obtain a bound in the $q$ aspect, only the Riemann hypothesis and not equidistribution is needed. Both papers go on to prove further results, respectively on the top nontrivial cohomology of the variety and on the asymptotics in the $q$ aspect, that do not follow from the Riemann hypothesis in this way.)

Applying the same idea to sums of arithmetic functions in short intervals requires generalizing the results from affine cones to more general affine complete intersections, a straightforward \'{e}tale cohomology calculation. Once this is done, we can show by a trick involving the logarithmic derivative (Lemma~\ref{singularityimprovement}) that the dimension of the singular locus is much smaller than it was for the variety studied by Hast and Matei, and we instead obtain a result approaching square-root cancellation for the sum in short intervals (instead of the Riemann hypothesis, which gives square-root cancellation for twisted sums over all polynomials of a given degree, from which weaker bounds for sums in short intervals can be deduced).

We expect that these results can be generalized to sums in arithmetic progressions and to moments of $L$-functions over other families of Dirichlet characters. The main difficulty is finding a suitable compactification.

This research was partially conducted during the period I served as a Clay Research Fellow, and partially conducted during the period I was supported by Dr. Max R\"{o}ssler, the Walter Haefner Foundation and the ETH Zurich Foundation. I would like to thank Emmanuel Kowalski for helpful conversations,  Jon Keating for the inspiration to attack this problem, Ofir Gorodetsky for helpful comments on an earlier version of this paper, and Mark Shusterman, Philippe Michel, Valentin Blomer, and Djordje Mili\'cevi\'c for information about prior work. I would also like to thank the four anonymous referees for their many useful comments.

\section{Geometric setup}\label{s-geom}

We work over a field $\kappa$. If the characteristic of $\kappa$ is nonzero, let $p$ be the characteristic of $\kappa$, and if the characteristic of $\kappa$ is zero, let $p =\infty$. After this section, we will specialize to the case where $\kappa$ is a finite field, and so $p$ will be its characteristic.

 Let $n$ and $m$ be natural numbers with $n\geq m$. We use ${\bf c}$ to refer to a tuple $(c_1,\dots,c_m)$ of elements of $\kappa$ (or another field).

Let $X_{n,m, {\bf c}}$ be the closed subscheme of $\mathbb A^n_{\kappa}$ with variables $(a_1,\dots,a_n)$ defined by the system of $m$ equations \[ \prod_{i=1}^n (1 - u a_i) = 1 + c_1 u + c_2 u^2 + \dots + c_m u^m \mod u^{m+1} \] where $u$ is a variable. (The equations are defined by equating the coefficients of each power of $u$ on both sides.) 

Let $\overline{X}_{n, m,{\bf c}}$ be its projective closure, i.e. the subspace of $\mathbb P^n_{\kappa}$ with variables $(a_1:\dots:a_n:z)$ defined by the system of $m$ equations \[ \sum_{\substack{ S \subseteq \{1,\dots,n\} \\ |S| = r}} \prod_{i \in S} a_i  =(-1)^r c_r z^r\] for $r$ from $1$ to $m$. Let $D_{n,m,{\bf c}} =  \overline{X}- X$ be the divisor defined by $z=0$. We will drop these subscripts when the meaning is clear.

There is a natural map $\mu: \mathbb A^n \to \mathbb A^n$ that sends the point with coordinates $(a_1,\dots,a_n)$ to the point whose coordinates are the coefficients of $\prod_{i=1}^n (1 - u a_i)$, in other words, the elementary symmetric polynomials in $a_1,\dots, a_n$. This map is finite - in fact, it is the quotient map by the $S_n$ action on $\mathbb A^n$. It will often be convenient to calculate the dimension of spaces by analyzing their image under $\mu$.

\begin{lemma}\label{dimension} The dimension of $\overline{X}_{n,m,{\bf c}}$ is $n-m$. In particular, $\overline{X}_{n,m,{\bf c}}$ is a complete intersection. \end{lemma}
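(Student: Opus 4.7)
The plan is to use the projective dimension bound in one direction and the finite symmetrization map $\mu$ in the other. Since $\overline{X}_{n,m,\bfc}$ is cut out in $\mathbb P^n$ by $m$ homogeneous equations, Krull's principal ideal theorem immediately gives that every irreducible component has dimension at least $n - m$. So it suffices to show every component has dimension at most $n - m$, which will then also yield the complete intersection claim.

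To get the upper bound, I would split $\overline{X} = X \sqcup D$ and bound each piece separately, using the fact that $\mu \colon \mathbb A^n \to \mathbb A^n$ is finite and therefore preserves dimension. For the affine part $X$: equating coefficients of $u^r$ in $\prod_{i=1}^n (1 - u a_i) = 1 + c_1 u + \cdots + c_m u^m \pmod{u^{m+1}}$ gives $(-1)^r e_r(a_1,\dots,a_n) = c_r$ for $r = 1,\dots,m$, where $e_r$ is the $r$th elementary symmetric polynomial. Thus $X$ is precisely $\mu^{-1}(L)$ for the affine-linear subspace
\[ L = \{(b_1,\dots,b_n) \in \mathbb A^n : b_r = (-1)^r c_r \text{ for } 1 \leq r \leq m\} \]
of dimension $n - m$. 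Since $\mu$ is finite, $\dim X = \dim L = n - m$.

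For the boundary $D = \overline{X} \cap \{z = 0\}$: setting $z = 0$ in the projective equations gives $e_r(a_1,\dots,a_n) = 0$ for $r = 1,\dots,m$ inside $\mathbb P^{n-1}$. The affine cone $\tilde D \subset \mathbb A^n$ over $D$ is then $\mu^{-1}$ of the linear subspace $\{b_1 = \cdots = b_m = 0\}$ of $\mathbb A^n$, which has dimension $n - m$. Finiteness of $\mu$ again gives $\dim \tilde D = n - m$, hence $\dim D = n - m - 1$.

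Combining the two, $\dim \overline{X} = \max(\dim X, \dim D) = n - m$, matching the lower bound. Equality with the expected dimension of $m$ hypersurfaces in $\mathbb P^n$ is exactly the statement that $\overline{X}$ is a complete intersection. There is no real obstacle here: the only substantive input is that $\mu$ is finite (being the quotient by the $S_n$ action), and once one passes dimension calculations through $\mu$, both $X$ and the boundary $D$ reduce to dimension counts for linear subspaces in $\mathbb A^n$.
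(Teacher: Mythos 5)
Your proof is correct and follows the same essential strategy as the paper: pass dimension computations through the finite symmetrization map $\mu$ so that everything reduces to counting dimensions of linear subspaces in $\mathbb A^n$. The only cosmetic difference is that you split $\overline{X}$ into the affine chart $X$ and the boundary $D$ and treat them separately, whereas the paper works with the affine cone on $\overline{X}$ and fibers over the coordinate $z$; you also make the Krull lower bound explicit where the paper leaves it implicit (via surjectivity of $\mu$), but these are not substantive differences.
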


\begin{proof} It suffices to check that the affine cone on $\overline{X}_{n,m,{\bf c}}$ has dimension $n-m+1$, or that the subscheme of this affine cone where $z$ has any given fixed value has dimension $n-m$.  To do this, because $\mu$ is finite, it suffices to show that the image of this subscheme under $\mu$ has dimension $n-m$. Because the equations defining this subscheme are given by setting the $r$'th coefficient of $\prod_{i=1}^n (1 - u a_i) $ to equal $c_r z^r$, the image of this subscheme under $\mu$ is contained in the space where the first $m$ coordinates are set to $c_1 z,\dots, c_m z^m$. Because $m \leq n$, fixing $m$ coordinates in $n$-dimensional affine space produces an $n-m$-dimensional variety, as desired.
\end{proof}

Let $R$ be the locus in $\overline{X}$ where $|\{a_1,\dots,a_n\}| \leq m-1$.  Note that because $n \geq m > m-1$, this locus consists of points with some significant number of repetitions among the $a_1,\dots,a_n$.

\begin{lemma}\label{singularitybasic} The scheme $\overline{X}$ is smooth, and $D$ is a smooth divisor, away from $R$.  \end{lemma}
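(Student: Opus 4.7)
The plan is to apply the Jacobian criterion to the homogeneous defining equations $F_r = e_r(a_1,\ldots,a_n) - (-1)^r c_r z^r$ (for $r = 1, \ldots, m$) of $\overline{X} \subset \mathbb{P}^n$. Writing $a_{\widehat{j}}$ for the tuple $(a_1,\ldots,a_n)$ with $a_j$ removed, one computes $\partial F_r/\partial a_j = e_{r-1}(a_{\widehat{j}})$ and $\partial F_r/\partial z = (-1)^{r+1} r c_r z^{r-1}$. At any point $P = (a_1:\cdots:a_n:z) \in \overline{X}$ with at least $m$ distinct values among $a_1,\ldots,a_n$, I will pick indices $j_1,\ldots,j_m$ for which $a_{j_1},\ldots,a_{j_m}$ are pairwise distinct and show that the $m \times m$ minor $M_{r,i} = e_{r-1}(a_{\widehat{j_i}})$ of the Jacobian is invertible; this forces the full Jacobian to have rank $m$, yielding smoothness of $\overline{X}$ at $P$ via the projective Jacobian criterion.

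The key step is the invertibility of $M$, which I propose to establish via generating functions. Let $Q(u) = \prod_{k=1}^n(1+u a_k)$. Then the $i$th column of $M$ is the coefficient vector in degrees $0$ through $m-1$ of $Q(u)/(1+u a_{j_i}) = \prod_{k \neq j_i}(1+u a_k)$. A hypothetical nontrivial relation $\sum_i \lambda_i M_{\bullet, i} = 0$ therefore reads $T(u)\, R(u) \equiv 0 \pmod{u^m}$, where $T(u) = \prod_{k \notin \{j_1,\ldots,j_m\}}(1+u a_k)$ is a unit in $\kappa[[u]]$ (its constant term is $1$), and $R(u) = \sum_i \lambda_i \prod_{i' \neq i}(1 + u a_{j_{i'}})$ is a polynomial of degree at most $m-1$. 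Hence $R(u)$ vanishes identically. Evaluating $R$ at $u = -1/a_{j_{i_0}}$, for any $i_0$ with $a_{j_{i_0}} \neq 0$, kills all summands except the $i_0$ one and forces $\lambda_{i_0} = 0$; in the exceptional case $a_{j_{i_0}} = 0$, the same trick applied at $u = -1/a_{j_{i'}}$ for $i' \neq i_0$ first yields $\lambda_{i'} = 0$ for every $i' \neq i_0$, after which reading off the constant term of $R$ gives $\lambda_{i_0} = 0$.

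For the divisor $D$, it remains to check that the Jacobian of the enlarged system $F_1,\ldots,F_m, z$ has rank $m+1$ at points of $D \setminus R$. At $z = 0$ the added $z$-row has only its $z$-entry equal to $1$, while the $z$-column of the $F_r$-rows is $(c_1, 0, \ldots, 0)^T$ (since $\partial F_r/\partial z$ vanishes for $r \geq 2$ at $z=0$). Augmenting the invertible minor $M$ by this $z$-column and $z$-row produces a block upper-triangular $(m+1) \times (m+1)$ submatrix with determinant $\det(M) \cdot 1 \neq 0$; so $D$ is smooth at $P$, and since $\overline{X}$ is also smooth there (and $z$ is a nonzero element in the regular local ring $\mathcal{O}_{\overline{X},P}$), $D$ is automatically a smooth Cartier divisor on $\overline{X}$.

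The hard part is the invertibility of $M$: once the generating-function identity reduces the claim to a polynomial identity in $u$, the rest is bookkeeping, with the only mild subtlety being the possibility that one of the chosen $a_{j_i}$ vanishes, which is dispatched by the second branch above.
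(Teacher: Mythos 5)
Your proof is correct and reaches the same conclusion, but it runs the Jacobian-criterion argument in a manner that is ``transposed'' relative to the paper's. The paper takes an arbitrary nontrivial linear combination of the \emph{rows} of the Jacobian (indexed by the equations $F_1,\dots,F_m$) and, via a generating-function computation, shows that its $a_j$-entry equals $g(a_j)$ for a fixed nonzero polynomial $g$ of degree $\le m-1$; the conclusion is then that all $n$ of the $a_j$ are roots of $g$, so $|\{a_1,\dots,a_n\}| \le m-1$. You instead work with \emph{columns}: at a point with $\ge m$ distinct coordinate values you extract the $m\times m$ minor $M$ indexed by $m$ pairwise distinct coordinates, identify its columns as truncated coefficient vectors of $Q(u)/(1+u a_{j_i})$, and show $M$ is invertible via the unit-factor reduction $T(u)R(u)\equiv 0 \pmod{u^m}$ plus evaluation of $R$ at $u=-1/a_{j_{i_0}}$. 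The two arguments are genuinely dual and both rest on the same generating-function insight; the paper's row-side argument is slightly slicker in two ways: (i) it handles $\overline X$ smooth \emph{and} $D$ a smooth divisor with a single condition, since $dz$ has vanishing $a_j$-derivatives, so ``some row combination has vanishing $a_j$-derivatives'' already covers the case $dz \in \operatorname{span}(dF_r)$ at a smooth point — you instead check the enlarged $(m+1)\times(m+1)$ system separately, which is also fine; and (ii) the root-count on $g$ needs no special treatment of zero, whereas your Lagrange-style evaluation requires a side branch for the possible vanishing coordinate $a_{j_{i_0}}=0$ (which you correctly supply). Both routes are valid; yours is perhaps a touch longer but makes the linear-algebra structure (invertibility of a specific minor) more explicit.
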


\begin{proof} At a point $(a_1:\dots:a_n:z) \in \overline{X} $, the space $\overline{X}$ is smooth and $D$ is a smooth divisor unless some linear combination of the equations defining $\overline{X}$ has vanishing derivatives with respect to the variables $a_1,\dots,a_n$.

So if $(a_1,\dots,a_n,z)$ is singular then for some nonzero $(\lambda_1,\dots,\lambda_m)$ we have \[\frac{\partial}{\partial a_j} F( a_1,\dots, a_n, \lambda_1,\dots, \lambda_r) =0 \] for all $j$ from $1$ to $n$, where  \[ F( a_1,\dots, a_n, \lambda_1,\dots, \lambda_r) = \frac{\partial}{\partial a_j}  \sum_{r=1}^m \lambda_r (-1)^r  \sum_{\substack{ S \subseteq \{1,\dots,n\} \\ |S| = r}} \prod_{i \in S} a_i \] is a linear combination of the defining equations, ignoring the constant terms $c_r z^r$. Now fix a single $j$ from $1$ to $n$. Because $F$ is the coefficient of $u^m$ in  \[ \left(\sum_{r=1}^m \lambda_r u^{m-r} \right) \left( \prod_{i=1}^n (1 - u a_i) \right),\] $\frac{\partial F}{\partial a_j}$ is the coefficient of $u^m$ in \[ \prod_{i=1}^n (1 - u a_i)  \left(\sum_{r=1}^m  \lambda_r u^{m-r} \right)   \frac { -u }{1-u a_j}.\] Let $d_i$ be such that \[ \prod_{i=1}^n (1 - u a_i)  \left(\sum_{r=1}^m \lambda_r u^{m-r} \right) (- u)= \sum_{i=1}^\infty d_i  u^i.\] Then $\frac{\partial F}{\partial a_j}$ is the coefficient of $u^m$ in \[\frac{ \sum_{i=1}^{\infty} d_i u^i }{ 1-u a_j},\] which is equal to \[\sum_{i=1}^m d_i a_j^{m-i} .\]  Letting \[g(x) = \sum_{i=1}^m d_i x^{m-i},\] we have \[ \frac{\partial F}{\partial a_j} = g(a_j).\]  Because $\left(\sum_{r=1}^m (-1)^r \lambda_r u^{m-r} \right) $ is not divisible by $u^m$,  \[\left(\sum_{r=1}^m (-1)^r \lambda_r u^{m-r} \right) \left( \prod_{i=1}^n (1 - u a_i) \right) (- u)\] is not divisible by $u^{m+1}$, so $d_1,\dots, d_m$ do not all vanish and thus $g$ is nonzero. If $(a_1,\dots,a_n,z)$ is singular, then $g(a_j)=\frac{\partial F}{\partial a_j}$ vanishes for all $j$, so the set $\{a_1, \dots a_n\}$ is contained in the set of roots of the polynomial $g$ of degree $\leq m-1$; since this polynomial is nonzero, the set $\{a_1,\dots,a_n\}$ has size $\leq m-1$, so  $(a_1,\dots,a_n,z)$ belongs to $R$.
\end{proof}

\begin{lemma}\label{singularityimprovement} The locus $R$ has dimension at most $\lfloor \frac{n}{p} \rfloor - \lfloor \frac{m}{p} \rfloor  $, and  $R \cap D$ has dimension at most $\lfloor \frac{n}{p} \rfloor - \lfloor \frac{m}{p} \rfloor  -1$. \end{lemma}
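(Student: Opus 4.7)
The plan is to stratify $R$ by the partition $\lambda = (e_1, \dots, e_k)$ that records the multiplicities with which $\{a_1, \dots, a_n\}$ takes its distinct values $b_1, \dots, b_k$; by hypothesis $k \le m-1$. Via the finite map $\mu$, the affine cone of $R_\lambda$ is identified with the closed subset of $\mathbb{A}^{k+1}$ parametrized by $(b_1, \dots, b_k, z)$ and cut out by the $m$ equations $[u^r] P(u) = c_r z^r$ for $r = 1, \dots, m$, with $P(u) = \prod_j (1-ub_j)^{e_j}$. It then suffices to show this cone has dimension at most $\lfloor n/p \rfloor - \lfloor m/p \rfloor + 1$, giving the projective bound after subtracting one.

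The main tool is the Frobenius decomposition. Writing $e_j = p f_j + e_j'$ with $0 \le e_j' < p$ and setting $S(u) = \prod_j (1-ub_j)^{e_j'}$, $Q(v) = \prod_j (1-v b_j^p)^{f_j}$, one has $P(u) = S(u) Q(u^p)$ with $\deg Q = \sum_j f_j \le \lfloor n/p \rfloor$. In characteristic $p$ the chain rule forces $\partial Q(u^p)/\partial b_l = 0$, so that $\partial P/\partial b_l = -e_l' u P/(1-ub_l)$ and $P'/P = S'/S$ depends only on the separable part---this is the ``log derivative trick.'' The refined power sums $q_s := \sum_j e_j' b_j^s$, which can be read off $P'/P \bmod u^m$, satisfy the Frobenius identity $q_{ps} = q_s^p$ (using $(e_j')^p = e_j'$ in $\mathbb{F}_p$), so the equation at each $u^r$ with $p \mid r$ is determined by those at $p \nmid r$, up to compatibility conditions on $\bfc$ without which $R_\lambda$ is empty.

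After this reduction the system splits into $m - \lfloor m/p \rfloor$ separable constraints, which rigidify the $(b_j)_{e_j' \ne 0}$ via power-sum equations in terms of $(\bfc, z)$, and $\lfloor m/p \rfloor$ inseparable constraints on the first $\lfloor m/p \rfloor$ coefficients of $Q$. The latter cut out a subvariety of the $Q$-parameter space that is itself an instance of the affine complete intersection studied in Lemma~\ref{dimension}, with $(n,m)$ replaced by $(N_Q, \lfloor m/p \rfloor)$, and hence of dimension at most $N_Q - \lfloor m/p \rfloor \le \lfloor n/p \rfloor - \lfloor m/p \rfloor$. Combined with the one-dimensional free $z$-parameter and the zero-dimensional generic fiber on the $(b_j)_{e_j' \ne 0}$-side, this yields the cone bound $\lfloor n/p \rfloor - \lfloor m/p \rfloor + 1$.

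The main obstacle will be handling the separable equations rigorously when $|\{j : e_j' \ne 0\}|$ exceeds $m - \lfloor m/p \rfloor$, so that the power-sum equations do not directly rigidify the corresponding $b_j$'s. A natural remedy is to project $R_\lambda$ to the $(b_j^p, z)$-coordinates and apply a fibered dimension estimate, or to recover the $b_j$ via the Hankel-matrix structure of the $q_s$'s. For the bound on $R \cap D$, the further constraint $z = 0$ removes the free $z$-parameter from the count above, dropping the projective dimension by one and yielding $\lfloor n/p \rfloor - \lfloor m/p \rfloor - 1$.
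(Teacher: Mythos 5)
Your proposal shares the germ of the paper's idea — using the logarithmic derivative and the fact that $\partial Q(u^p)/\partial b = 0$ in characteristic $p$ — but takes a much harder route and, as you yourself flag, the route has a gap that is not filled. The paper's argument has a decisive simplification that yours misses: it works with $R\cap D$ \emph{first}, not with $R$ or its affine cone. At a point of $D$ we have $z=0$, so the defining equations force $\prod_i(1-ua_i)\equiv 1\pmod{u^{m+1}}$, and hence the numerator of $\partial_u\log\prod_i(1-ua_i)$ has $u$-adic valuation $\geq m$. Meanwhile, because the point lies in $R$, the right-hand side $\sum_i \frac{-a_i}{1-ua_i}$ is a rational function whose denominator has degree $\leq m-1$ and which vanishes at $\infty$, so its numerator has degree $\leq m-2$. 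This contradiction forces $\partial_u\prod(1-ua_i)=0$, i.e.\ $\prod(1-ua_i)$ is a polynomial in $u^p$ with its low-degree coefficients vanishing, and one simply reads off the dimension of the image under $\mu$. The bound for $R$ then follows by the hyperplane-section inequality $\dim R \leq \dim(R\cap D)+1$. Your approach inverts this order: you try to bound the full affine cone of $R$ directly and only at the end note that imposing $z=0$ drops the dimension by one more. Away from $z=0$, the clean valuation contradiction is unavailable, which is why you are driven to the stratification by $\lambda$, the Frobenius decomposition $P = S\cdot Q(u^p)$, and a count of separable versus inseparable constraints — all of which is much heavier machinery than needed.

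Beyond the stylistic difference, the gap you identify is genuine and not easily patched as stated. When $|\{j : e_j'\neq 0\}| > m - \lfloor m/p\rfloor$, your "rigidification" of the $b_j$'s by the separable power-sum equations does not give a zero-dimensional fiber, and neither proposed remedy is carried out: projecting to $(b_j^p,z)$-coordinates loses information about $S(u)$ that the constraints depend on, and the Hankel-matrix idea is not developed enough to see that it yields the required codimension. In addition, the reduction of the inseparable constraints to "an instance of the affine complete intersection studied in Lemma~\ref{dimension}" is not justified: those constraints on the coefficients of $Q$ are entangled with $S(u)$ via products, and it is not clear they have the prescribed form. Finally, your deduction of the bound for $R\cap D$ from the bound for $R$ by "removing the free $z$-parameter" assumes without argument that $z$ actually varies freely on each stratum; the paper avoids this by proving the $R\cap D$ bound directly and deducing the $R$ bound via the hyperplane section, where the needed inequality is automatic. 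I would recommend restructuring the proof around the $z=0$ locus, where the valuation argument collapses the whole problem, rather than trying to control the full affine cone.
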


Here if $\lfloor \frac{n}{p} \rfloor - \lfloor \frac{m}{p} \rfloor=0$, for instance when $\kappa$ has characteristic $0$, so $p=\infty$, the claim that $R \cap D$ has dimension $-1$ means that $R \cap D$ is empty.

\begin{proof} It is sufficient to show that $R \cap D$ has dimension at most $\lfloor \frac{n}{p} \rfloor - \lfloor \frac{m}{p} \rfloor  -1$, because $D$ is a projective hyperplane section of $\overline{X}$ and so it follows that $R$ has dimension at most $\lfloor \frac{n}{p} \rfloor - \lfloor \frac{m}{p} \rfloor$. Fix a point $(a_1:\dots,a_n: 0)$ in $R \cap D$  and observe that \[  \frac{ \frac{\partial}{\partial u}  \prod_{i=1}^n (1- u a_i) }{  \prod_{i=1}^n (1- u a_i) } = \sum_{i=1}^n \frac{ - a_i}{ 1- u a_i} .\]

Because $z=0$, we have $\prod_{i=1}^n (1- u a_i) =1 $ modulo $u^{m+1}$, and so $\frac{\partial}{\partial u}  \prod_{i=1}^n (1- u a_i)$ is divisible by $u^m$. Hence the numerator of the left side is divisible by $u^m$ while the denominator is prime to $u$. Hence the $u$-adic valuation of the left side is at least $m$. On the other hand, examining the right side, its denominator has degree at most  $| \{ a_1,\dots,a_n\} |\leq m-1$ and it vanishes at $\infty$, hence its numerator has degree $\leq m-2$.  Thus the $u$-adic valuation of the right side is at most $m-2$ unless the right side vanishes. Because the first possibility is a contradiction, both sides vanish, and \begin{equation}\label{derivative-vanish}\frac{\partial}{\partial u}  \prod_{i=1}^n (1- u a_i)=0.\end{equation} Hence, if $\kappa$ has characteristic $p>0$, $\prod_{i=1}^n (1- u a_i) $ is a polynomial in $u^p$. Furthermore, because $(a_1: \dots : a_n : 0)$  lies in $\overline{X}$, the coefficients of $u$ up to $u^m$ of $\prod_{i=1}^n (1- u a_i) $  must vanish.

Thus $(a_1: \dots: a_n:0)$ lies in $R \cap D$ only if $\prod_{i=1}^n (1- u a_i) $ is a polynomial in $u^p$ whose coefficients of $u$ up to $u^m$ vanish. For $(a_1,\dots, a_n)$ lying in the affine cone on $R \cap D$, the same condition holds. Now let us examine the image of this affine cone under $\mu$. It consists of $n$-tuples whose $i$th coordinate vanishes $i \leq m$ or $i$ is not divisible by $p$.  The dimension of the space of such polynomials is the number of $m < i \leq n$ which are divisible by $p$, which is $\lfloor \frac{n}{p} \rfloor - \lfloor \frac{m}{p} \rfloor$. Because $\mu$ is finite, the dimension of the affine cone is at most $\lfloor \frac{n}{p} \rfloor - \lfloor \frac{m}{p} \rfloor$. Thus the dimension of $R \cap D$ is at most $\lfloor \frac{n}{p} \rfloor - \lfloor \frac{m}{p} \rfloor-1$, as desired.

If $\kappa$ has characteristic zero, \eqref{derivative-vanish} implies that $\prod_{i=1}^n (1- u a_i)=1 $ and so $a_1=\dots = a_n=0$, meaning the affine cone on $D \cap R$ has dimension $0$, $D \cap R$ is empty, and $R$ has dimension zero.
\end{proof}

The following lemma is a variant of \cite[appendix by Nicholas M. Katz, assertion (2) in proof of Theorem 1]{HooleyCompleteIntersection} and we prove it by adapting Katz's method.

\begin{lemma}\label{vanishinglemma} Let $\overline{X}$ be a complete intersection in projective space of dimension $d$ defined over $\kappa$, let $D$ be a hyperplane section in $\overline{X}$, and let $X = \overline{X} - D$. Let $Z$ be the complement of the largest open subset of $\overline{X}$ where $\overline{X}$ is smooth and $D$ is a smooth divisor. Let $\ell$ be a prime invertible in $\kappa$. Then $H^i_c ( X_{\overline{\kappa}}, \mathbb Q_\ell)=0$ for $\dim Z + d + 1< i < 2 d $.  If $\dim Z < d -1$, then $H^{2d}_c (X_{\overline{\kappa}}, \mathbb Q_\ell) = \mathbb Q_\ell(-d)$. \end{lemma}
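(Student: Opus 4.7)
The proof should follow the method of Katz's appendix cited in the statement. My plan is to use Verdier duality on $X$ to convert the compactly-supported vanishing into a low-degree vanishing of ordinary cohomology, and then invoke an affine Lefschetz theorem for singular complete intersections.

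\emph{Step 1 (Verdier duality reduction).} Since $\overline{X}$ is a complete intersection in $\mathbb{P}^N$ and $D$ is a hyperplane section, the affine complement $X = \overline{X} - D$ is a complete intersection in the affine chart $\mathbb{A}^N$, hence a local complete intersection of pure dimension $d$. For an LCI over a field the dualizing complex is trivial, $\omega_X \cong \mathbb{Q}_\ell(d)[2d]$ (this follows from $f_X^! \mathbb{Q}_\ell = i^! f_{\mathbb{A}^N}^! \mathbb{Q}_\ell$ applied to the regular embedding $i \colon X \hookrightarrow \mathbb{A}^N$), so Verdier duality yields
\[ H^i_c(X_{\overline{\kappa}}, \mathbb{Q}_\ell) \cong H^{2d-i}(X_{\overline{\kappa}}, \mathbb{Q}_\ell)^\vee(-d). \]
Setting $j = 2d - i$, the assertion $H^i_c = 0$ for $\dim Z + d + 1 < i < 2d$ becomes the vanishing $H^j(X_{\overline{\kappa}}, \mathbb{Q}_\ell) = 0$ for $0 < j < d - \dim Z - 1$, and the assertion $H^{2d}_c = \mathbb{Q}_\ell(-d)$ becomes $H^0(X_{\overline{\kappa}}, \mathbb{Q}_\ell) = \mathbb{Q}_\ell$.

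\emph{Step 2 (Affine Lefschetz with singularities).} Artin's affine vanishing handles $H^j(X_{\overline{\kappa}}, F) = 0$ for any constructible $F$ and $j > d$ directly from the affineness of $X$; this covers only the trivial half. The low-degree vanishing $H^j(X_{\overline{\kappa}}, \mathbb{Q}_\ell) = 0$ for $0 < j < d - \dim Z - 1$ is the content of the affine Lefschetz theorem for complete intersections with singularities. The proof goes by induction on $d$ using a Bertini argument: a sufficiently generic hyperplane section $X \cap H \subset \mathbb{A}^N$ is again a complete intersection of dimension $d - 1$ whose singular locus has dimension at most $\dim Z$, and the Gysin / excision long exact sequence for the closed inclusion $X \cap H \hookrightarrow X$ lowers the Lefschetz bound by one at each step. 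This is precisely Katz's main assertion in the cited appendix.

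\emph{Step 3 (Connectedness for the top-degree computation).} When $\dim Z < d - 1$ the singular locus has codimension $\ge 2$ in the projective complete intersection $\overline{X}$. Since $\overline{X}$ is a positive-dimensional complete intersection in projective space, it is geometrically connected, and its smooth part $U = \overline{X} - Z$ remains connected after codimension-$\ge 2$ removal; then $X \cap U = U - (D \cap U)$ is a smooth connected affine variety (removing a smooth divisor from a smooth connected variety preserves connectedness in this setting), and it is dense in $X$ since $X \cap Z$ has strictly smaller dimension. Hence $X$ is connected, $H^0(X_{\overline{\kappa}}, \mathbb{Q}_\ell) = \mathbb{Q}_\ell$, and Verdier duality gives $H^{2d}_c(X_{\overline{\kappa}}, \mathbb{Q}_\ell) = \mathbb{Q}_\ell(-d)$.

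The main obstacle is Step 2, the affine Lefschetz theorem with singularities. Katz's appendix treats the case where the affine variety is the cone over a projective complete intersection; the adaptation required here is to replace the cone by the general affine CI $\overline{X} - D$, but the Verdier-duality-plus-Artin-vanishing framework works identically in both settings, and the Bertini induction is insensitive to whether the ambient hyperplane is the cone vertex or a general hyperplane section.
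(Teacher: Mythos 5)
Your Step~1 contains a genuine error that undermines the whole plan. You assert that for a local complete intersection $X \hookrightarrow \mathbb{A}^N$ of dimension $d$, the dualizing complex is $\omega_X \cong \mathbb{Q}_\ell(d)[2d]$, deduced from $i^! \mathbb{Q}_\ell \cong \mathbb{Q}_\ell(-c)[-2c]$ for the regular embedding $i$. But absolute purity gives $i^!\mathbb{Q}_\ell \cong \mathbb{Q}_\ell(-c)[-2c]$ only when \emph{both} the ambient space \emph{and} the subscheme are regular; being a regular embedding (LCI) is not enough. This is not a technicality: it fails already for the simplest singular complete intersection. Take $X \subset \mathbb{A}^2$ to be the node $\{xy=0\}$, a hypersurface. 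A Mayer--Vietoris computation over the two lines gives $H^1_c(X_{\overline{\kappa}}, \mathbb{Q}_\ell) = \mathbb{Q}_\ell$, while $H^1(X_{\overline{\kappa}}, \mathbb{Q}_\ell) = 0$. If $\omega_X$ were $\mathbb{Q}_\ell(1)[2]$, Verdier duality would force $H^1_c(X) \cong H^1(X)^\vee(-1) = 0$, a contradiction. So the duality formula $H^i_c(X) \cong H^{2d-i}(X)^\vee(-d)$ that your Step~1 rests on is simply false for singular complete intersections, and the reduction of the compactly-supported vanishing to an ordinary-cohomology Lefschetz statement does not go through. (The restriction of $\omega_X$ to the smooth locus is constant, which is why a duality argument works in Proposition~\ref{vanishingstatement} after passing to the smooth open $U$; but passing to $U$ changes the $H^i_c$ groups, and the lemma is about all of $X$.)

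The paper's actual proof takes a different route. It first handles the case $Z = \emptyset$ directly, using the excision sequence relating $H^*_c(X)$ to $H^*(\overline{X})$ and $H^*(D)$ together with the classical description of the cohomology of smooth projective complete intersections in degrees $> d$ as generated by the hyperplane class. For general $Z$, it deforms $(\overline{X}, D)$ in a one-parameter family to a generic smooth pair and applies the theory of vanishing cycles: the nearby/vanishing cycle complex $R\Phi j_! \mathbb{Q}_\ell[d]$ is semiperverse (a general fact about $R\Phi$) and is supported on $Z$ (because $R\Phi$ of a tame sheaf vanishes near a smooth pair with normal crossings boundary). Semiperversity plus support on $Z$ puts its hypercohomology in degrees $\le \dim Z + d$, so the specialization map to the generic (smooth) fibre is an isomorphism above that range, reducing the claim to the smooth case. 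This bypasses Verdier duality for the singular affine variety entirely, which is precisely the step your argument cannot supply. If you want to keep the spirit of your plan, you would need to replace the constant sheaf by a complex whose duality theory you actually control (e.g.\ the intersection complex), but then the comparison between its $H^*_c$ and the naive $H^*_c(X, \mathbb{Q}_\ell)$ that the lemma requires is itself the substance of the problem.
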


\begin{proof}

Let us first check the claims in the case that $Z$ is empty. By convention, in this case $\dim Z =-1$, so we must show that $H^i_c ( X_{\overline{\kappa}}, \mathbb Q_\ell)=0$ for $d < i < 2d$ and $H^{2d}_c (X_{\overline{\kappa}}, \mathbb Q_\ell) = \mathbb Q_\ell(-d)$.

Recall that for a smooth complete intersection $\overline{X}$ of dimension $d$, the cohomology of $X$ in degrees $>d$ is generated by the powers of the hyperplane class, which are nonzero exactly in the even degrees from $0$ \cite[XI, Theorem 1.6 (i), (iii)]{sga7-ii}. 
To calculate $H^i_c ( X_{\overline{\kappa}}, \mathbb Q_\ell)=0$, we apply the excision \cite[XVII, (5.1.16.2)]{sga4-3} to the open set $X$ of $\overline{X}$, obtaining a long exact sequence \begin{equation}\label{excision-sequence} H^{i-1} ( \overline{X}_{\overline{\kappa}},\mathbb Q_\ell) \to H^{i-1} (D_{\overline{\kappa}} ,\mathbb Q_\ell) 
 \to  H^i(X_{\overline{\kappa}} , \mathbb Q_\ell) \to H^i ( \overline{X}_{\overline{\kappa}},\mathbb Q_\ell) \to H^i(D_{\overline{\kappa}} ,\mathbb Q_\ell) .\end{equation}
 Applying our description of the cohomology of complete intersections to both $\overline{X}$ and $D$, we see that the natural map $H^i(\overline{X}_{\overline{\kappa}},\mathbb Q_\ell)\to H^i(D_{\overline{\kappa}},\mathbb Q_\ell)$ is surjective for $i>d-1$ because its image contains all powers of the hyperplane class, it is injective for  $d< i < 2 d$ because nonzero powers of the hyperplane class are sent to nonzero powers of the hyperplane class, and it is zero in degree $i= 2 d$.  From \eqref{excision-sequence}, it follows that  $H^i_c ( X_{\overline{\kappa}}, \mathbb Q_\ell)=0$ for $d < i < 2d$ and $H^{2d}_c (X_{\overline{\kappa}}, \mathbb Q_\ell) =H^{2d}(\overline{X}_{\overline{\kappa}}, \mathbb Q_\ell)$ is generated by the $d$th power of the hyperplane class and thus is $\mathbb Q_\ell(-d)$.

Now we prove the claims for an arbitrary complete intersection. A generic such complete intersection is smooth a generic hyperplane section is smooth as well. So we may find a one-parameter family $\overline{\mathcal X}_t$ of complete intersections containing a family of hyperplanes $\mathcal D_t$ such that $\overline{\mathcal X}_0= \overline{X}$ and $\mathcal D_0=D$,  such that $\mathcal X_t$ is smooth and $\mathcal D_t$ is a smooth divisor for generic $t$.  (To do this, pick $F_1,\dots, F_r $ the defining equations of $\overline{X}$, $G_1,\dots, G_r$ the defining equations of a generic complete intersection, and let $\mathcal X_t$ be defined by $(1-t) F_1+ t G_1,\dots, (1-t) F_r + t G_r$. Then because smoothness is an open condition, smoothness at $t=1$ implies smoothness generically.)

Let $j: \overline{\mathcal X} - \mathcal D \to \overline{\mathcal X}$ be the open immersion.  We have $H^i_c ( X_{\overline{\kappa}}, \mathbb Q_\ell)= H^i(  \overline{\mathcal X}_{0,\overline{\kappa}}, j_! \mathbb Q_\ell)$ by definition.

We now use the theory of vanishing cycles from \citep[XIII, \S2.1]{sga7-ii}. This gives a functor $R \Phi$ from $\ell$-adic sheaves on a family $\mathcal X_t$ over a curve to complexes of $\ell$-adic sheaves on a special fiber $X_0$. We have the long exact sequence \citep[XIII, (2.1.8.9)]{sga7-ii}\begin{equation}\label{vanishing-cycles-sequence}[ H^* (  \overline{\mathcal X}_{0,\overline{\kappa}}, j_! \mathbb Q_\ell) \to H^*( \overline{\mathcal X}_{\overline{\eta}},  j_! \mathbb Q_\ell) \to H^* (\overline{\mathcal X}_{0,\overline{\kappa}}, R \Phi j_! \mathbb Q_\ell) .\end{equation}  

For any sheaf $\mathcal F$ on a variety of dimension $d$, $\mathcal F[d]$ is semiperverse in the sense that its support in degree $-i$ has dimension $\leq i$, so $R\Phi \mathcal F[d]$ is semiperverse by \cite[Corollary 4.6]{Illusie} (though the case we need follows also from the argument of \cite[I, Theorem 4.2(ii)]{sga7-i}), and thus $R \Phi j_! \mathbb Q_\ell [d]$ is semiperverse.

For any open immersion $j$ into a smooth variety whose complement is a normal crossings divisor, and lisse sheaf $\mathcal F$ on the open set with tame monodromy around the normal crossings divisor, $ R\Phi j_! \mathcal F$ vanishes \cite[XIII, Lemma 2.1.1]{sga7-ii}. Because $j$ is the complement of a normal crossings divisor away from $Z$, and $\mathbb Q_\ell$ is always tame, $R \Phi j_! \mathbb Q_\ell$ is supported on $Z$. 

Because $R \Phi j_! \mathbb Q_\ell [d]$ is semiperverse and supported on $Z$, its cohomology is supported in degree $\leq \dim Z$ \cite*[4.2.3]{BBDG}, so $ H^*(\overline{\mathcal X}_{0,\overline{\kappa}}, R \Phi j_! \mathbb Q_\ell)$ is supported in degree $\leq \dim Z + d$. Thus the first arrow of the long exact sequence \eqref{vanishing-cycles-sequence} is an isomorphism in degrees $> \dim Z + d + 1$. Because  \[  H^i( \overline{\mathcal X}_{\overline{\eta}},  j_! \mathbb Q_\ell)= H^i_c(\mathcal X_{\overline{\eta}}, \mathbb Q_\ell)\] and the cohomology groups are as stated for $\mathcal X_{\overline{\eta}}$, they are also as stated for $X$.

\end{proof} 

Note that $S_n$ acts on $X_{n,m,\bfc}$ by permuting the coordinates $a_1,\dots,a_n$. This preserves the defining equations because they are symmetric in these coordinates. Hence $S_n$ acts by functoriality on the cohomology groups $H^i_c(X_{n,m, \bfc, \overline{\kappa}},\mathbb Q_\ell)$.

\begin{prop}\label{vanishingstatement} Let $\ell$ be a prime invertible in $\kappa$ and let $\bfc \in \kappa^{m}$ be a fixed tuple. We have $H^i_c(X_{n,m, \bfc, \overline{\kappa}},\mathbb Q_\ell)=0$ for $n-m + \lfloor \frac{n}{p} \rfloor - \lfloor \frac{m}{p} \rfloor  +1 < i < 2 (n-m)$.

Furthermore, as long as $ \lfloor \frac{n}{p} \rfloor - \lfloor \frac{m}{p} \rfloor  +1< n-m$, we have $H^{2n-2m}_c( X_{n,m, \bfc, \overline{\kappa} },\mathbb Q_\ell) = \mathbb Q_\ell(-(n-m))$, and the action of $S_n$ on this cohomology group is trivial. \end{prop}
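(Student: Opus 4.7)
The plan is to apply Lemma \ref{vanishinglemma} directly to $\overline{X} = \overline{X}_{n,m,\bfc}$, $D = D_{n,m,\bfc}$, and $X = X_{n,m,\bfc}$. Lemma \ref{dimension} identifies the dimension as $d = n - m$ and confirms we have a complete intersection. The bad locus $Z$ appearing in Lemma \ref{vanishinglemma} is contained in $R$ by Lemma \ref{singularitybasic}, and by Lemma \ref{singularityimprovement} we have $\dim Z \le \dim R \le \lfloor n/p \rfloor - \lfloor m/p \rfloor$. Feeding these bounds into Lemma \ref{vanishinglemma} yields $H^i_c(X_{n,m,\bfc,\overline{\kappa}}, \mathbb Q_\ell) = 0$ precisely in the range $n - m + \lfloor n/p \rfloor - \lfloor m/p \rfloor + 1 < i < 2(n-m)$, giving the first claim. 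Under the extra hypothesis $\lfloor n/p \rfloor - \lfloor m/p \rfloor + 1 < n - m$ we get $\dim Z < d - 1$, and the second half of Lemma \ref{vanishinglemma} supplies the isomorphism $H^{2(n-m)}_c(X_{n,m,\bfc,\overline{\kappa}}, \mathbb Q_\ell) \cong \mathbb Q_\ell(-(n-m))$.

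The remaining task is to argue triviality of the $S_n$-action on this one-dimensional space. I would avoid trying to produce an $S_n$-equivariant deformation to a smooth fiber: in positive characteristic this is obstructed by the intrinsic singular locus $R$ from Lemma \ref{singularityimprovement}, which persists for every value of $\bfc$. Instead, my plan is to invoke the general fact that for any variety $Y$ of dimension $d$ over an algebraically closed field, $H^{2d}_c(Y, \mathbb Q_\ell)$ is canonically the free $\mathbb Q_\ell(-d)$-module on the fundamental classes of the $d$-dimensional irreducible components of $Y$, with automorphisms of $Y$ acting by the induced permutation of those components. (This follows by an excision argument: removing the union of lower-dimensional components and pairwise intersections leaves a disjoint union of dense opens in the top-dimensional components, each contributing one copy of $\mathbb Q_\ell(-d)$ to $H^{2d}_c$; the excision sequence identifies this direct sum with $H^{2d}_c(Y)$ because the removed locus has strictly smaller dimension.) Since our $H^{2(n-m)}_c(X_{n,m,\bfc,\overline{\kappa}}, \mathbb Q_\ell)$ is one-dimensional, $X_{n,m,\bfc,\overline{\kappa}}$ must have a unique $(n-m)$-dimensional irreducible component $X'$. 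The $S_n$-action is then forced to fix $X'$ setwise, and automorphisms of the (geometrically) irreducible variety $X'$ act trivially on $H^{2(n-m)}_c(X') \cong \mathbb Q_\ell(-(n-m))$, since they fix the fundamental class.

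The first two conclusions of the proposition are bookkeeping from Lemmas \ref{dimension}, \ref{singularitybasic}, \ref{singularityimprovement}, and \ref{vanishinglemma}. The main obstacle is the $S_n$-triviality assertion; the key move is to resist the tempting but characteristic-$p$-obstructed equivariant-deformation approach and instead to observe that one-dimensionality of the top cohomology forces the existence of a unique top-dimensional component, which $S_n$ then has no choice but to fix.
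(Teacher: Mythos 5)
Your proposal is correct. The vanishing range and the identification of the top compactly supported cohomology come from Lemmas \ref{dimension}, \ref{singularitybasic}, \ref{singularityimprovement}, and \ref{vanishinglemma} exactly as in the paper, and your observation that the deformation underlying Lemma \ref{vanishinglemma} cannot be used to read off the $S_n$-action (a generic complete intersection admits no $S_n$-action, so the deformation is not equivariant) correctly identifies why a separate argument for triviality is needed.

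For that last step your route is a repackaging of the paper's rather than a genuinely different one. The paper passes to the dense smooth open $U \subset X_{n,m,\bfc}$ (dense because $\dim R < n-m$ under the stated hypothesis), uses excision and Poincar\'e duality to identify $H^{2(n-m)}_c(X_{n,m,\bfc,\overline{\kappa}},\mathbb Q_\ell)$ with $(H^0(U_{\overline{\kappa}}, \mathbb Q_\ell(n-m)))^\vee$ via canonical, hence $S_n$-equivariant, isomorphisms, and then observes that the one-dimensional $H^0$ is spanned by the visibly $S_n$-invariant global section $1$. You instead invoke the description of $H^{2d}_c$ of a $d$-dimensional variety as the span of the classes of its $d$-dimensional irreducible components, with automorphisms acting by the induced permutation, conclude from one-dimensionality that there is a unique top-dimensional component which $S_n$ therefore stabilizes, and finish by noting that automorphisms of a geometrically irreducible variety fix the generator of its top $H_c$. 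These are two phrasings of the same geometric input: a complete proof of the general fact you quote, and of the last triviality step, runs through precisely the excision-plus-Poincar\'e-duality-to-$H^0$-of-a-smooth-dense-open argument that the paper writes out explicitly. Either version is fine; the paper's is marginally more self-contained since it proves the needed facts in place rather than citing the components description.
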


\begin{proof} By Lemma~\ref{dimension}, the assumptions of Lemma~\ref{vanishinglemma} are satisfied, with dimension $\dim X = n-m$. By Lemmas~\ref{singularitybasic} and~\ref{singularityimprovement}, $\dim Z \leq \lfloor \frac{n}{p} \rfloor - \lfloor \frac{m}{p} \rfloor$. Hence by Lemma~\ref{vanishinglemma}, we get vanishing in the stated degrees, and also the calculation of the top cohomology. It remains to calculate the $S_n$ action.

Let $U$ be the (open) smooth locus of $X_{n,m,\bfc}$. If $ \lfloor \frac{n}{p} \rfloor - \lfloor \frac{m}{p} \rfloor  +1< n-m$ then $U$ is dense in $X_{n,m,\bfc}$ by Lemmas \ref{dimension} and \ref{singularityimprovement}. Thus we have isomorphisms \begin{equation}\label{eq-poincare} H^{2n-2m}_c( X_{n,m, \bfc, \overline{\kappa} },\mathbb Q_\ell) = H^{2n-2m}_c( U_{ \overline{\kappa} },\mathbb Q_\ell) = (H^0 (U_{\overline{\kappa}}, \mathbb Q_\ell (n-m) )^\vee. \end{equation} by excision and Poincar\'{e} duality \cite[XIX, Lemma 2.1 and (3.2.6.2)]{sga4-3}.  By Lemma~\ref{vanishinglemma}, $H^{2n-2m}_c( X_{n,m, \bfc, \overline{\kappa} },\mathbb Q_\ell) $ is one-dimensional, so $H^0 (U_{\overline{\kappa}}, \mathbb Q_\ell (n-m) )$ is one-dimensional, and thus it must be generated by the global section $1 \in \mathbb Q_\ell$.

Because the smooth locus $U$ is stable under automorphisms, it is in particular stable under $S_n$, so $S_n$ also acts on $U$. The global section $1$ is clearly $S_n$-invariant, so $S_n$ acts trivially on $H^0 (U_{\overline{\kappa}}, \mathbb Q_\ell (n-m) )$. Because the isomorphisms in \eqref{eq-poincare} are canonical, they commute with the automorphisms in $S_n$, so $S_n$ acts trivially on $H^{2n-2m}_c( X_{n,m, \bfc, \overline{\kappa} },\mathbb Q_\ell)$. \end{proof}
%
%
%
%
%

\section{Factorization functions}\label{s-ff}

We now explain the key definition of a factorization function on monic polynomials of degree $n$. We associate one such function to each representation $\pi$ of $S_n$. Throughout, we fix a prime $\ell$.

\begin{defi}\label{fac-fun} Fix $\pi$ a representation of $S_n$, not necessarily irreducible, defined over $\mathbb Q_\ell$. 

For $f$ a polynomial of degree $n$ over $\mathbb F_q$, let $V_f$ be the free $\mathbb Q_\ell$ vector space generated by tuples $(a_1,\dots,a_n) \in \overline{\mathbb F}_q$ with $\prod_{i=1}^n (T-a_i)=f$. Then $V_f$ admits a natural $S_n$ action by permuting the $a_i$, as well as an action of $\Frob_q$ by sending $(a_1,\dots,a_n) $ to $(a_1^q,\dots, a_n^q)$, which commute.

Let $\Frob_q$ act trivially on $\pi$. This commutes with the $S_n$ action.

Let \[F_\pi (f) = \tr (\Frob_q, (V_f \otimes \pi)^{S_n} )\] where the action of $\Frob_q$ on $(V_f \otimes \pi)^{S_n}$ is well-defined because the $\Frob_q$ and $S_n$-actions on $V_f \otimes \pi$ commute.  \end{defi}

\begin{lemma}  In the case when $f$ is squarefree, we have $F_\pi(f) = \tr(\sigma_q, \pi)$ where $\sigma_q$ is an element of the conjugacy class of $\Frob_q$ acting on the roots of $f$. \end{lemma}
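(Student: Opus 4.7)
The plan is to identify $V_f$ explicitly with the regular representation of $S_n$ and then compute directly. First I would fix an ordering $(\alpha_1,\dots,\alpha_n)$ of the roots of $f$ in $\overline{\mathbb F}_q$; since $f$ is squarefree, the roots are distinct, so every tuple $(a_1,\dots,a_n)$ with $\prod_i (T-a_i) = f$ is obtained by permuting this fixed ordering. This gives a bijection between the basis of $V_f$ and $S_n$: let $e_\tau$ denote the basis vector corresponding to $(\alpha_{\tau(1)},\dots,\alpha_{\tau(n)})$. The $S_n$ action permutes coordinates, hence acts on $\{e_\tau\}$ by $g\cdot e_\tau = e_{\tau g^{-1}}$, realizing $V_f$ as the regular representation $\mathbb Q_\ell[S_n]$ (with $S_n$ acting on itself by right multiplication by the inverse).

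Next I would describe the Frobenius action. Because $\Frob_q$ acts on the roots themselves, there is a permutation $\sigma_q \in S_n$ defined by $\alpha_i^q = \alpha_{\sigma_q(i)}$; by construction the conjugacy class of $\sigma_q$ is independent of the chosen ordering. Then $\Frob_q$ sends $e_\tau$ to the basis vector corresponding to $(\alpha_{\tau(1)}^q,\dots,\alpha_{\tau(n)}^q) = (\alpha_{\sigma_q\tau(1)},\dots,\alpha_{\sigma_q\tau(n)})$, i.e.\ $\Frob_q(e_\tau) = e_{\sigma_q\tau}$. So Frobenius acts on $V_f = \mathbb Q_\ell[S_n]$ by left multiplication by $\sigma_q$, and this commutes with the $S_n$ action (right multiplication by the inverse), as the definition demands.

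I would then compute $(V_f\otimes \pi)^{S_n}$ by the standard identification. A general element $\sum_\tau e_\tau \otimes w_\tau$ is $S_n$-invariant iff $w_\tau = \tau^{-1} w_e$ for all $\tau$, so the map $\varphi: \pi \to (V_f\otimes\pi)^{S_n}$ defined by $\varphi(w) = \sum_\tau e_\tau \otimes \tau^{-1} w$ is an isomorphism. Applying Frobenius (which acts trivially on $\pi$) gives
\[
\Frob_q \,\varphi(w) = \sum_\tau e_{\sigma_q \tau} \otimes \tau^{-1} w = \sum_{\tau'} e_{\tau'}\otimes (\tau')^{-1}\sigma_q w = \varphi(\sigma_q w),
\]
after the substitution $\tau' = \sigma_q\tau$. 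Hence, via $\varphi$, $\Frob_q$ acts on $\pi$ simply as the permutation $\sigma_q$. Taking traces yields $F_\pi(f) = \tr(\Frob_q, (V_f\otimes\pi)^{S_n}) = \tr(\sigma_q,\pi)$.

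The only subtle point is bookkeeping with conventions (left vs.\ right actions of $S_n$ on the regular representation, and the sign of $\tau^{-1}$); the dependence on the chosen initial ordering of the roots disappears because changing the ordering conjugates $\sigma_q$, which leaves $\tr(\sigma_q,\pi)$ invariant. No nontrivial cohomological input is needed for this lemma — it is purely representation theory once the bijection with $S_n$ is set up.
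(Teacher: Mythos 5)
Your proposal is correct and follows essentially the same route as the paper's proof: identify $V_f$ with the regular representation $\mathbb Q_\ell[S_n]$ (right action by $\sigma^{-1}$, Frobenius as left multiplication by $\sigma_q$), exhibit the standard isomorphism $\pi \cong (\mathbb Q_\ell[S_n]\otimes\pi)^{S_n}$ via $w \mapsto \sum_\tau [\tau]\otimes\tau^{-1}w$, and reindex to see that Frobenius transports to the action of $\sigma_q$ on $\pi$. The only difference is presentational: you spell out the left/right conventions and the well-definedness up to conjugacy more explicitly than the paper does, which makes the bookkeeping easier to check but adds no new idea.
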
 

\begin{proof} For such an $f$, we have an isomorphism $V_f \cong \mathbb Q_\ell[S_n]$.
  
  This is obtained by fixing a factorization $f =\prod_{i=1}^n (T-a_i)$ and sending $\sigma$ in $S_n$ to  $(a_{\sigma(1)},\dots, a_{\sigma(n)} )$. Having does this, the $S_n$ action on $V_f$ corresponds to the right $S_n$-action (by the inverse) on $\mathbb Q_\ell[S_n]$ , and the action of $\Frob_q $  corresponds to the left action of the unique permutation $\sigma_q \in  S_n$ with $a_{\sigma(i)} = a_i^q$. The conjugacy class of $\sigma_q$ is the usual conjugacy class of Frobenius on $S_n$.

This gives an isomorphism $(V_f \otimes \pi)^{S_n} \cong( \mathbb Q_\ell [S_n] \otimes \pi)^{S_n}\cong \pi$. Under this isomorphism, the action of $\Frob_q$ on $(V_f \otimes \pi)^{S_n} $ is sent to the action of $\sigma_q$ on $\pi$, because the isomorphism $\pi \to  (\mathbb Q_\ell [S_n] \otimes \pi)^{S_n}$ sends $v \in \pi$ to  $ \sum_{\sigma \in S_n} [\sigma] \otimes \sigma^{-1} (v) $  and so it sends $\sigma_q( v)$ to \[ \sum_{\sigma \in S_n} [\sigma] \otimes \sigma^{-1}( \sigma_q(v) ) = \sum_{\sigma \in S_n} [\sigma_q \sigma] \otimes \sigma^{-1}(v ) = \Frob_q \Bigl( \sum_{\sigma \in S_n} [ \sigma] \otimes \sigma^{-1}(v ) \Bigr).\]

 Hence $F_\pi(f) = \tr(\Frob_q,   (V_f \otimes \pi)^{S_n} )= \tr( \sigma_q, \pi)$.\end{proof}

  The function $F_\pi$ is from several perspectives the most natural extension of the character of $\pi$, evaluted on the conjugacy class of $\Frob_q$, from squarefree polynomials to all polynomials. For instance, it agrees with some definitions previously the literature:

\begin{remark}Definition \ref{fac-fun} is a special case of \cite[Definition 1.1 and Section 1.3]{Gadish}. In fact, thee first step in the proof of Proposition \ref{cohomologyestimate} below is essentially a special case of \cite[Theorem A]{Gadish}.
\end{remark}

\begin{remark} The functions $F_\pi$  span the arithmetic functions of von Mangoldt type in the sense of \cite[Definition 4.3]{HastMatei}. \end{remark}

Furthermore, many natural arithmetic functions can be expressed as $F_\pi$ or as linear combinations of them.

\begin{lemma}\label{fac-Mobius}Let $\operatorname{sign}$ be the sign representation of $S_n$. For $f$ monic of degree $n$, we have\[ \mu(f) = (-1)^{n} F_{\operatorname{sign}}.\]

\end{lemma}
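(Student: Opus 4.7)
I would prove the lemma by splitting into cases according to whether $f$ is squarefree, dispatching the squarefree case via the previous lemma and the non-squarefree case via a direct representation-theoretic vanishing.

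If $f$ is squarefree, the previous lemma gives $F_{\operatorname{sign}}(f) = \operatorname{sign}(\sigma_q)$. Writing $f = \prod_{i=1}^r \pi_i$ with the $\pi_i$ distinct monic irreducibles of degrees $d_i$, Frobenius permutes the roots of each $\pi_i$ as a single $d_i$-cycle, so $\sigma_q$ has cycle type $(d_1,\dots,d_r)$ and hence $\operatorname{sign}(\sigma_q) = \prod_{i=1}^r (-1)^{d_i-1} = (-1)^{n-r}$. Multiplying by $(-1)^n$ yields $(-1)^r = \mu(f)$, as required.

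If $f$ is not squarefree then $\mu(f)=0$, so it suffices to show $(V_f\otimes\operatorname{sign})^{S_n}=0$. Decompose $V_f$ as a direct sum over $S_n$-orbits on the set of tuples $(a_1,\dots,a_n)$ with $\prod(T-a_i)=f$; each orbit contributes a summand isomorphic to $\operatorname{Ind}_H^{S_n}\mathbf{1}$, where $H\subseteq S_n$ is the stabilizer of a chosen tuple in that orbit. By Frobenius reciprocity,
\[
(\operatorname{Ind}_H^{S_n}\mathbf{1}\otimes\operatorname{sign})^{S_n}
= \operatorname{Hom}_{S_n}(\operatorname{sign},\operatorname{Ind}_H^{S_n}\mathbf{1})
= \operatorname{Hom}_H(\operatorname{sign}|_H,\mathbf{1}),
\]
which vanishes unless $H\subseteq A_n$. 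But non-squarefreeness of $f$ forces every such tuple to satisfy $a_i=a_j$ for some $i\neq j$, so the transposition $(i\,j)\in H$ lies outside $A_n$; hence every orbit contributes zero and $F_{\operatorname{sign}}(f)=0=\mu(f)$. I do not anticipate a substantive obstacle here — the argument is essentially the observation that the naive antisymmetrization kills any factorization with a repeated root, organized cleanly via Frobenius reciprocity on the $S_n$-orbit decomposition.
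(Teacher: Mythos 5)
Your proof is correct and follows essentially the same two-case structure as the paper's: the squarefree case reduces to a cycle-type computation of $\operatorname{sign}(\sigma_q)$, and the non-squarefree case shows $(V_f\otimes\operatorname{sign})^{S_n}=0$ because a transposition swapping two coinciding roots fixes the corresponding tuple but acts by $-1$ on $\operatorname{sign}$. The only cosmetic difference is that you package the vanishing via an $S_n$-orbit decomposition and Frobenius reciprocity rather than inspecting coefficients of basis vectors directly; the underlying observation is identical.
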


\begin{proof}  First we assume that $f$ is not squarefree. In this case $\mu(f)=0$, so it suffices to show that $(V_f \otimes \operatorname{sign} )^{S_n}=0$. Because $f$ has repeated roots, each basis vector of $V_f$ corresponds to a tuple $a_1,\dots ,a_n$ with  $a_i =a_j$ for some $1 \leq i< j \leq n$.  A transposition in $S_n$ that switches $i$ and $j$ fixes this basis vector but acts as $-1$ on $\operatorname{sign}$. Thus the coefficient of this basis vector in any $S_n$-invariant element of $V_f \otimes \operatorname{sign} $ is equal to minus itself, and vanishes. Because every coefficient of every $S_n$-invariant element of $S_n$-invariant element of $V_f \otimes \operatorname{sign} $ vanishes, $V_f \otimes \operatorname{sign} =0$.

Now assume that $f$ is squarefree. Then $F_\pi$ is simply the sign of Frobenius in $S_n$, which is $(-1)$ to the number of even cycles. Multiplying by $(-1)^n$, we obtain $(-1)$ raised to the number of all cycles, which is $(-1)$ raised to the number of prime factors of $f$, which by definition is the M\"{o}bius function.  \end{proof}

\begin{lemma}\label{fac-von} For $f$ monic of degree $n$, we have \[ \Lambda(f) = \sum_{i=0}^{n-1} (-1)^i F_{\wedge^i ( \std )} =   F_{\oplus_{i \textrm  { even}} \wedge^i (\std ) } -   F_{\oplus_{i \textrm { odd}} \wedge^i (\std ) }.\]
 \end{lemma}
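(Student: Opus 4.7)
The second equality is a regrouping of the first by parity, so I focus on the identity $\Lambda(f) = \sum_{i=0}^{n-1} (-1)^i F_{\wedge^i(\std)}(f)$. The plan is to package the alternating sum as a single virtual representation $\Pi := \sum_{i=0}^{n-1} (-1)^i \wedge^i(\std)$ of $S_n$, use the character formula for $S_n$-invariants to rewrite $F_\Pi(f)$ as a trace on $V_f$ alone, and then count fixed points. Since $F_\pi$ is additive in $\pi$ (immediate from Definition~\ref{fac-fun}), it is enough to prove $F_\Pi(f) = \Lambda(f)$.

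The first step is to identify the character of $\Pi$. The generating identity $\sum_i (-1)^i t^i \tr(\sigma \,|\, \wedge^i \std) = \det(1 - t\sigma \,|\, \std)$, combined with the decomposition of the permutation representation as $\mathbf{1}\oplus \std$, gives
\[ \det(1 - t\sigma \,|\, \std) = \frac{1}{1-t}\prod_{j=1}^k (1 - t^{\lambda_j}), \]
where $\lambda_1,\dots,\lambda_k$ are the cycle lengths of $\sigma$ (the right-hand side is a polynomial in $t$, since $k\geq 1$). Setting $t=1$, this vanishes unless $k=1$ (i.e.\ $\sigma$ is an $n$-cycle), in which case it equals $n$. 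So $\chi_\Pi$ is supported on the class of $n$-cycles and takes the value $n$ there.

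Next I invoke the invariant trace formula. Because $\Frob_q$ commutes with $S_n$ on $V_f \otimes \Pi$ and acts trivially on $\Pi$, applying the projector $\frac{1}{n!}\sum_\sigma \sigma$ and factoring the trace on the tensor product gives
\[ F_\Pi(f) = \tr\bigl(\Frob_q, (V_f \otimes \Pi)^{S_n}\bigr) = \frac{1}{n!}\sum_{\sigma \in S_n} \chi_\Pi(\sigma)\, \tr(\Frob_q \sigma, V_f). \]
Since $\chi_\Pi$ is supported on the $(n-1)!$-element class of $n$-cycles, on which $\tr(\Frob_q\sigma, V_f)$ is constant, the sum collapses to $F_\Pi(f) = \tr(\Frob_q \sigma_0, V_f)$ for any fixed $n$-cycle $\sigma_0$.

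Finally I count basis tuples of $V_f$ fixed by $\Frob_q \sigma_0$, taking $\sigma_0 = (1\,2\,\cdots\,n)$. Unwinding the conventions of Definition~\ref{fac-fun}, the fixed-point equation reads $a_{i+1} = a_i^q$ (indices mod $n$), so the tuple is determined by $a_1$, which must satisfy $a_1^{q^n} = a_1$ (i.e.\ $a_1 \in \mathbb F_{q^n}$) and $\prod_{j=0}^{n-1}(T - a_1^{q^j}) = f$. If $\pi$ is the minimal polynomial of $a_1$ over $\mathbb F_q$, of degree $d$, the left-hand side equals $\pi^{n/d}$, so this forces $f = \pi^{n/d}$ for some monic irreducible $\pi$, and in that case the number of valid $a_1$ is the number of roots of $\pi$, namely $d=\deg \pi$. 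This matches $\Lambda(f)$ exactly. The main obstacle I expect is keeping the character computation in the first step clean; the rest is additivity, the invariant trace formula, and a direct orbit count.
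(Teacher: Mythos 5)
Your proof is correct, and it takes a genuinely different route from the paper's. The paper identifies $(V_f\otimes\wedge^i\std)^{S_n}$ with $\wedge^i$ of the space of functions on the roots of $f$ summing to zero, recognizes $\sum_i(-1)^iF_{\wedge^i\std}(f)$ as $\det(1-\Frob_q)$ on that space, and evaluates the determinant case by case (more than one Frobenius orbit gives an eigenvalue $1$ and a zero determinant; a single orbit of length $k$ gives the nontrivial $k$th roots of unity and determinant $k$). You instead compute the character of the virtual representation $\Pi=\sum(-1)^i\wedge^i\std$, find it is supported on $n$-cycles with value $n$, use the averaging formula $\tr(\Frob_q,(V_f\otimes\Pi)^{S_n})=\frac{1}{n!}\sum_\sigma\chi_\Pi(\sigma)\tr(\Frob_q\sigma,V_f)$ (valid because the two actions commute and this class function depends only on the conjugacy class of $\sigma$), collapse the sum to a single twisted trace $\tr(\Frob_q\sigma_0,V_f)$ for an $n$-cycle $\sigma_0$, and count fixed basis tuples. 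Both arguments are complete and about the same length. Yours is closer to a classical necklace-counting argument and avoids the structural identification of the invariant space; the paper's is more conceptual and makes the appearance of the characteristic polynomial of Frobenius transparent, which is useful context for the surrounding material. One small stylistic point: it is worth saying explicitly that $\tr(\Frob_q\sigma,V_f)$ is a class function in $\sigma$ because conjugation by $\tau\in S_n$ commutes with $\Frob_q$ on $V_f$, since that is what lets the sum over the $(n-1)!$ $n$-cycles collapse.
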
 
 
This identity was explained to me by Vlad Matei.
 
 \begin{proof} First observe that $(V \otimes \wedge^i {\std})^{S_n}$ is $\wedge^i$ of the vector space of functions on the roots of $f$ (ignoring multiplicity) that sum to zero. (This follows by induction from the fact that $(V \otimes \wedge^i ({\std} + \mathbb Q_\ell))^{S_n}$ is the space of antisymmetric functions on $n$-tuples of roots of $f$ and thus is $\wedge^i$ of the space of functions on the roots of $f$).
 
 So $\sum_{i} (-1)^i F_{\wedge^i (\std )}(1)$ is the characteristic polynomial of Frobenius on the vector space of functions on the roots of $f$ that sum to $0$. Hence $\sum_{i} (-1)^i F_{\wedge^i (\std )}(1)$  vanishes if Frobenius fixes any function on the roots of $f$ that sums to $0$. This happens if Frobenius has more than one orbit on the roots of $f$. In other words, $\sum_{i} (-1)^i F_{\wedge^i (\std )}(1)$ vanishes when $f$ is not a prime power. When $f$ is the $n/k$th power of a prime of degree $k$, this vector space has dimension $(k-1)$ and the eigenvalues of Frobenius are all the nontrivial $k$th roots of unity, so the value of the characteristic polynomial is the evaluation of $(1-x^k )/(1-x)$ at $1$, which is $k$. On the other hand, the von Mangoldt function is $ k $ in this case.

The last identity then follows from the additivity of $F_\pi$ in the representation $\pi$, which follows from the additivity of the tensor product, cohomology, $S_n$-invariants, and trace operations used to define it.

 \end{proof}
 
 Rather than describing the divisor function as a factorization function directly, it will be more useful to express a certain variant of it:
  
 \begin{defi} For natural numbers $n_1,\dots, n_k$ with $\sum_{i=1}^k n_i =n$, let $d_k^{ (n_1,\dots,n_k)}(f)$ be the function that takes a monic polynomial $f \in \mathbb F_q[T]$ of degree $n$ to the number of tuples $g_1,\dots,g_k$ of monic polynomials in $\mathbb F_q[T]$, with $g_i$ of degree $n_i$ for all $i$, such that $\prod_{i=1}^k g_i = f$. \end{defi}

It is clear from the definitions that, for $f$ monic of degree $n$, \begin{equation}\label{divisor-relation} d_k(f) = \sum_{\substack{n_1,\dots,n_k \in \mathbb N \\ \sum_{i=1}^k n_i =n} } d_k^{(n_1,\dots,n_k)}(f)  \end{equation}
The individual terms $d_k^{(n_1,\dots,n_k)}(f)  $ will be relevant in our proofs involving $L$-functions.

 \begin{lemma}\label{fac-div} For $f$ monic of degree $n$ have \[ F_{ \operatorname{Ind}_{S_{n_1} \times \dots \times S_{n_k}}^{S_n} \mathbb Q_\ell} (f) = d_k^{ (n_1,\dots,n_k)}(f).\]  \end{lemma}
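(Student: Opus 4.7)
The plan is to reduce the computation to a Frobenius reciprocity statement and then count fixed points of a permutation action.

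First I would apply Frobenius reciprocity in its $\Frob_q$-equivariant form. Setting $H = S_{n_1} \times \dots \times S_{n_k}$ and $\pi = \Ind_H^{S_n} \mathbb{Q}_\ell$, there is a natural isomorphism
\[ (V_f \otimes \pi)^{S_n} \cong V_f^H. \]
Explicitly, realizing $\pi$ as $\mathbb{Q}_\ell[S_n/H]$, an $S_n$-invariant element of $V_f \otimes \pi$ is determined by its $eH$-component, which is forced to lie in $V_f^H$; conversely every element of $V_f^H$ extends uniquely by $S_n$-equivariance. Because $\Frob_q$ commutes with the $S_n$-action on $V_f$ and acts trivially on $\pi$, this canonical isomorphism is $\Frob_q$-equivariant.

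Second, I would identify $V_f^H$ as a permutation representation of $\Frob_q$. Since $V_f$ has a basis indexed by tuples $(a_1,\dots,a_n) \in \overline{\mathbb{F}}_q^n$ with $\prod_{i=1}^n (T-a_i) = f$, and $H$ acts by permutations, the space $V_f^H$ has a basis indexed by $H$-orbits, obtained by summing over each orbit. Each $H$-orbit corresponds bijectively to an ordered tuple of multisets of sizes $n_1,\dots,n_k$ whose disjoint union is the multiset of roots of $f$, or equivalently to an ordered factorization $f = g_1 \cdots g_k$ where $g_i \in \overline{\mathbb{F}}_q[T]$ is monic of degree $n_i$ (take $g_i = \prod_{j \in I_i}(T - a_j)$ for the $i$th block $I_i$). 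The $\Frob_q$-action on this basis is the permutation action sending $(g_1,\dots,g_k)$ to $(g_1^{(q)},\dots,g_k^{(q)})$, where $g^{(q)}$ denotes applying $\Frob_q$ to the coefficients.

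Finally, the trace of a permutation of a finite set, viewed as acting on the corresponding vector space, equals the number of fixed points. A factorization $(g_1,\dots,g_k)$ over $\overline{\mathbb{F}}_q$ is fixed by $\Frob_q$ exactly when each $g_i$ is already defined over $\mathbb{F}_q$; by definition, the number of such tuples is $d_k^{(n_1,\dots,n_k)}(f)$. The only real subtlety is ensuring the Frobenius reciprocity isomorphism in the first step is genuinely $\Frob_q$-equivariant, which is routine once one writes the identification out because $\Frob_q$ acts trivially on $\pi$ and the identification only remembers the $eH$-component.
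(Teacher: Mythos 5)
Your proof is correct and takes essentially the same route as the paper: apply Frobenius reciprocity to reduce to $V_f^H$, identify $V_f^H$ as the permutation module on $H$-orbits (i.e., ordered factorizations of $f$ into monic factors of degrees $n_1,\dots,n_k$ over $\overline{\mathbb F}_q$), and then compute the trace of $\Frob_q$ as the number of fixed points. You spell out the $\Frob_q$-equivariance of the Frobenius reciprocity isomorphism a bit more explicitly than the paper, but the argument is the same.
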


\begin{proof}Observe that $F_{ \operatorname{Ind}_{S_{n_1} \times \dots \times S_{n_k}}^{S_n}} (f)$ is the trace of $\Frob_q$ on  $(V_f \otimes \operatorname{Ind}_{S_{n_1} \times \dots \times S_{n_k}}^{S_n} \mathbb Q_\ell)^{S_n} = V_{f}^{ S_{n_1} \times \dots \times S_{n_k}}$. Now $V_f^{ S_{n_1}\times \dots \times S_{n_k}}$ is the free vector space on the $S_{n_1}\times \dots \times S_{n_k}$-orbits of factorizations of $f$ into linear factors, which are simply the factorizations of $f$ into factors of degree $n_1,\dots,n_k$. The Frobenius element acts by permuting these, hence its trace is equal to the number of Frobenius-fixed factorizations, which is the number of factorizations defined over $\mathbb F_q$, as desired. \end{proof}

\section{Proofs of the main theorems}

In this section, we work in the geometric setting of Section \ref{s-geom}, but specialize the base field $\kappa$ to a finite field $\mathbb F_q$. We fix a prime $\ell$ invertible in $\mathbb F_q$.

To combine the geometric setup of Section \ref{s-geom} with the factorization functions discussed in Section \ref{s-ff}, which deal with monic polynomials, we introduce a chance of variables. The key point is that the coefficients of the polynomial $\prod_{i=1}^n (1- u a_i)$ in $u$, in ascending order, are equal to the coefficients of the monic polynomial $\prod_{i=1}^n (T - a_i)$, in descending order. Thus the map $\mu: \mathbb A^n \to \mathbb A^n$ defined in Section \ref{s-geom} as the map that sends $(a_1,\dots,a_n)$ to the coefficients of $\prod_{i=1}^n (1- u a_i)$ is equal to the map that sends $(a_1,\dots, a_n)$ to the coefficients of $\prod_{i=1}^n (T -a_i)$, as long as we interpret the coefficients in ascending and descending order respectively. 

For $\bfc \in \mathbb F_q^m$ define \[ \mathcal I_{\bfc} = \{ f \in \mathbb F_q[T] | f = T^n + c_1 T^{n-1} + \dots + c_m T^{n-m} + \dots \}  .\] We think of $\mathcal I_{\bfc}$ as a short interval in $\mathbb F_q[T]$.

Then $X_{n,m,\bfc}(\mathbb F_q)$ is the set of $a_1,\dots,a_n \in \mathbb \mathbb F_q$ with $\prod_{i=1}^n (T -a_i) \in \mathcal I_{\bfc}$. Furthermore, because $\mu$ is the quotient map under $S_n$, the quotient $ X_{n,m,\bfc}/S_n$ is the image of $X_{n,m,\bfc}$ under $\mu$, so $(X_{n,m,\bfc}/S_n) (\mathbb F_q) $ is exactly the set of monic polynomials $\mathcal I_c$.

(The reason for this change of variables is that working in the $u$ variable simplifies the calculations in Section \ref{s-geom}, while the $T$ variable is needed here.)

We will now prove a bound for sums of factorization functions. The error term will depend on the following quantity, which we will bound afterwards:

\begin{defi} For $\pi$ a representation of $S_n$, let $B(\pi) = \sum_{i=0}^{2 \dim X} \dim \left( \left( H^i_c (X_{n,m, \bfc}, \mathbb Q_\ell) \otimes \pi \right)^{S_n} \right).$\end{defi}

\begin{prop}\label{cohomologyestimate} Let $n>m$ be natural numbers. Let $c_1,\dots, c_m$ be elements of $\mathbb F_q$ and let $\pi$ be a representation of $S_n$. Then,

\[ \Biggl| \sum_{ f \in \mathcal I_{\bfc} } F_{\pi}(f) - q^{n-m} \dim \left(\pi^{S_n}\right) \Biggr| \leq B(\pi) q^{\frac{1}{2}\left( n-m + \lfloor \frac{n}{p} \rfloor - \lfloor \frac{m}{p} \rfloor  +1\right)}.\] \end{prop}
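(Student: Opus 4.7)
The plan is to realize both sides cohomologically and apply the Grothendieck--Lefschetz trace formula, then invoke Proposition~\ref{vanishingstatement} and Deligne's weight bound. Let $\varphi\colon X_{n,m,\bfc} \to X_{n,m,\bfc}/S_n$ denote the quotient by $S_n$. By the set-up at the start of this section, $(X_{n,m,\bfc}/S_n)(\mathbb F_q) = \mathcal I_{\bfc}$, and the geometric fiber of $\varphi$ over $f \in \mathcal I_{\bfc}$ is exactly the indexing set for a basis of $V_f$. Let $\underline\pi$ be the constant $\mathbb Q_\ell$-sheaf on $X_{n,m,\bfc}$ with fiber $\pi$, endowed with the $S_n$-equivariant structure coming from the action of $S_n$ on $X_{n,m,\bfc}$ combined with the representation $\pi$. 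Then the stalk of $(\varphi_*\underline\pi)^{S_n}$ at $f$ is $(V_f \otimes \pi)^{S_n}$, with Frobenius trace $F_\pi(f)$ by Definition~\ref{fac-fun}. Since $\varphi$ is finite (so $R\varphi_* = \varphi_*$) and since $S_n$-invariants is exact on $\mathbb Q_\ell$-vector spaces, Grothendieck--Lefschetz yields
\[ \sum_{f \in \mathcal I_{\bfc}} F_\pi(f) = \sum_{i=0}^{2(n-m)} (-1)^i \tr\bigl(\Frob_q,\, (H^i_c(X_{n,m,\bfc,\overline{\mathbb F}_q}, \mathbb Q_\ell) \otimes \pi)^{S_n}\bigr). \]

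In the main case $\lfloor n/p\rfloor - \lfloor m/p\rfloor + 1 < n-m$, Proposition~\ref{vanishingstatement} identifies $H^{2(n-m)}_c$ with $\mathbb Q_\ell(-(n-m))$ carrying the trivial $S_n$-action, so the $i=2(n-m)$ term on the right contributes exactly $q^{n-m}\dim(\pi^{S_n})$, the main term. The same proposition kills every cohomology group in degrees strictly between $n-m+\lfloor n/p\rfloor-\lfloor m/p\rfloor+1$ and $2(n-m)$, so only degrees $i \leq n-m+\lfloor n/p\rfloor-\lfloor m/p\rfloor+1$ survive as error. For these degrees Deligne's Weil~II weight bound gives $|\tr(\Frob_q, H^i_c)| \leq \dim(H^i_c)\,q^{i/2}$; since $\Frob_q$ acts trivially on $\pi$, tensoring and taking $S_n$-invariants preserves this termwise, so that
\[ \Bigl|\sum_{f \in \mathcal I_{\bfc}} F_\pi(f) - q^{n-m}\dim(\pi^{S_n})\Bigr| \leq \sum_{i \leq n-m+\lfloor n/p\rfloor-\lfloor m/p\rfloor+1} \dim\bigl((H^i_c \otimes \pi)^{S_n}\bigr)\,q^{i/2} \leq B(\pi)\,q^{(n-m+\lfloor n/p\rfloor-\lfloor m/p\rfloor+1)/2}. \]
The complementary degenerate range $\lfloor n/p\rfloor - \lfloor m/p\rfloor + 1 \geq n-m$ is handled by applying the Deligne bound directly to every cohomology term: the claimed exponent already exceeds $n-m$, so the estimate $B(\pi)q^{n-m}$ coming from summing over all $i \leq 2(n-m)$, together with the trivial control of the main term, yields the result.

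The main obstacle is the sheaf-theoretic bookkeeping in the first paragraph: verifying that $(\varphi_*\underline\pi)^{S_n}$ really has the stated stalks and Frobenius action, and that $S_n$-invariants commute past both $\varphi_*$ and $H^*_c$. These are routine consequences of the finiteness of $\varphi$ and of working with characteristic-zero coefficients, but they form the conceptual bridge between the arithmetic sum $\sum F_\pi(f)$ and the étale cohomology of $X_{n,m,\bfc}$ controlled in Sections~\ref{s-geom} and~\ref{s-ff}.
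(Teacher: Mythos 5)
Your main-case argument is essentially identical to the paper's: both realize $\sum_{f\in\mathcal I_{\bfc}} F_\pi(f)$ as $\sum_i(-1)^i\tr(\Frob_q,(H^i_c(X_{n,m,\bfc})\otimes\pi)^{S_n})$ via the finiteness of the quotient map, the projection formula/Leray argument, and Grothendieck--Lefschetz; both extract the main term from $H^{2(n-m)}_c = \mathbb Q_\ell(-(n-m))$ with trivial $S_n$-action using Proposition~\ref{vanishingstatement}; both kill intermediate degrees with the same proposition and bound the rest by Deligne's weight estimate. Your sheaf $\bigl(\varphi_*\underline\pi\bigr)^{S_n}$ is the same object as the paper's $\bigl(\rho_*\mathbb Q_\ell\otimes\pi\bigr)^{S_n}$ by the projection formula, so this is purely notational.

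Where you diverge is in the degenerate range $\lfloor n/p\rfloor - \lfloor m/p\rfloor +1 \geq n-m$, which the paper silently glosses over but which you correctly flag. However, your patch has a gap. Bounding $\bigl|\sum_f F_\pi(f)\bigr|$ by $B(\pi)q^{n-m}$ and ``trivially controlling'' the main term $q^{n-m}\dim\pi^{S_n}$ separately gives at best $\bigl(B(\pi)+\dim\pi^{S_n}\bigr)q^{n-m}$; even granting $\dim\pi^{S_n}\leq B(\pi)$, this is $2B(\pi)q^{n-m}$, which exceeds the claimed $B(\pi)q^{K/2}$ exactly when $K/2=n-m$ (e.g.\ $n=3,\ m=2,\ p=2$), and also for small $q$ when $K/2>n-m$. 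The fix is to observe that even without the full conclusion of Proposition~\ref{vanishingstatement}, $X_{n,m,\bfc}$ is pure of dimension $n-m$ with dense $S_n$-stable smooth locus $U$, so Poincar\'e duality and excision give $H^{2(n-m)}_c(X)\cong H^0(U)^\vee(-(n-m))$, which always contains the class dual to the constant section as a Frobenius-fixed, $S_n$-trivial line. Thus $(H^{2(n-m)}_c\otimes\pi)^{S_n}$ contains a $\Frob_q$-equivariant copy of $\pi^{S_n}(-(n-m))$ on which $\Frob_q$ acts by exactly $q^{n-m}$; the main term cancels against precisely this subquotient, and the remaining eigenvalues in top degree still have absolute value $\leq q^{n-m}$, so one lands on $(B(\pi)-\dim\pi^{S_n})q^{n-m}\leq B(\pi)q^{K/2}$ without the spurious factor of $2$.
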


\begin{proof} Let $\rho: X_{n,m, \bfc} \to X_{n,m, \bfc}  / S_n$ be the projection map. Equivalently, $\rho$ is the restriction of $\mu$ to $X_{n,m , \bfc}$. Then \begin{equation}\label{bring-sn-inside} \left( H^i_c (X_{n,m, \bfc}, \mathbb Q_\ell) \otimes \pi \right)^{S_n} = \left(H^i_c( X_{n,m, \bfc}/ S_n,   \rho_* \mathbb Q_\ell ) \otimes \pi \right)^{S_n}  = H^i_c( X_{n,m, \bfc}/ S_n,  ( \rho_* \mathbb Q_\ell \otimes \pi ) ^{S_n} ) .\end{equation}

Here the first identity follows from the Leray spectral sequence with compact supports \cite[XVII, (5.1.8.2)]{sga4-3} applied to $\rho$, using the fact that $\rho$ is finite and so its higher cohomology vanishes, and the second uses the fact that \'{e}tale cohomology is an additive functor and finite group actions are semisimple in characteristic zero so \'{e}tale cohomology commutes with invariants under a finite group action.

By the Grothendieck-Lefschetz fixed point formula \begin{equation}\label{gl} \sum_i (-1)^{i} \tr( \Frob_q, H^i_c( X_{n,m, \bfc}/ S_n,  ( \rho_* \mathbb Q_\ell \otimes \pi ) ^{S_n} )  ) = \sum_{ x \in (X_{n,m, \bfc}/ S_n )(\mathbb F_q)} \tr(\Frob_q,  ( \rho_* \mathbb Q_\ell \otimes \pi ) ^{S_n}_x ).\end{equation}

We have seen earlier that $(X_{n,m, \bfc}/ S_n)(\mathbb F_q)$ is the short interval $\mathcal I_{\bfc}$. Thus to check that
\begin{equation}\label{geometry-equals-nt} \sum_{ x \in X_{n,m, \bfc}/ S_n (\mathbb F_q)} \tr(\Frob_q,  ( \rho_* \mathbb Q_\ell \otimes \pi ) ^{S_n}_x ) =  \sum_{ f \in \mathcal I_{\bfc} } F_{\pi}(f)  \end{equation}
it suffices to show that, for the point $x$ corresponding to the polynomial $f$,
\[ \tr\left(\Frob_q,  ( \rho_* \mathbb Q_\ell \otimes \pi ) ^{S_n}_x \right)  = F_\pi(f). \]

The stalk of $\rho_* \mathbb Q_\ell$ at $f$ is the free vector space generated by the fiber of $\rho$ over $f$. This free vector space is canonically isomorphic to $V_f$ because the fiber over $\rho$ consists of tuples $(a_,\dots, a_n ) \in \overline{\mathbb F}_q$ with $\prod_{i=1} (T-a_i)=f$. The natural actions of $S_n$ and $\Frob_q$ on this vector space are by permutation on the fiber of $\rho$, which means that $S_n$ acts by permuting the $a_i$s and $\Frob_q$ acts by raising each $a_i$ to the $q$th power. This matches the actions we have defined for $V_f$.

Thus we obtain \eqref{geometry-equals-nt}. Combining \eqref{geometry-equals-nt} with \eqref{bring-sn-inside} and \eqref{gl}, we get \[ \sum_{f \in \mathcal I_{\bfc} } F_\pi(f) =  \sum_i (-1)^i \tr\left( \Frob_q, \left( H^i_c (X_{n,m, \bfc}, \mathbb Q_\ell) \otimes \pi \right)^{S_n} \right)  .\]
\
We have $H^{2(n-m)}(X_{n,m, \bfc}, \mathbb Q_\ell)= \mathbb Q_\ell( - (n-m))$ with trivial $S_n$ action. So the contribution of $i=2(n-m)$ is  $q^{n-m} \pi^{S_n}$.

For each other $i$, by \cite[Theorem 1]{weilii}, all eigenvalues of $\Frob_q$ on $H^i_c$ are bounded by $q^{i/2}$, so the contribution of $H^i_c$ is bounded by $q^{i/2} \dim \left( \left( H^i_c (X_{n,m, \bfc}, \mathbb Q_\ell) \otimes \pi \right)^{S_n} \right)$. By Proposition~\ref{vanishingstatement}, these cohomology groups vanish unless $i \leq n-m + \lfloor \frac{n}{p} \rfloor - \lfloor \frac{m}{p} \rfloor  +1$. Summing over all $i$, we obtain the stated bound. \end{proof}

\begin{prop}\label{bettibound} Let $n_1,\dots,n_k\geq 0$ be natural numbers summing to $n$ and let $\pi$ be a subrepresentation of $\operatorname{Ind}_{S_{n_1} \times \dots \times S_{n_k}}^{S_n} \mathbb Q_\ell$. Then for any $\bfc$, we have $B(\pi) \leq 3 (k+2)^{n+m}$. \end{prop}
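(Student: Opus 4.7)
The plan is to reduce the bound on $B(\pi)$ to a bound on the sum of compactly supported Betti numbers of an explicit affine quotient variety, and then invoke a Bombieri--Katz style estimate.

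First, applying Frobenius reciprocity, I would eliminate the representation $\pi$. Let $H=S_{n_1}\times\cdots\times S_{n_k}$. Because $S_n$-representations over $\mathbb Q_\ell$ are semisimple and self-dual, and $\pi$ is a subrepresentation of $\operatorname{Ind}_H^{S_n}\mathbb Q_\ell$, the multiplicity of every irreducible in $\pi$ is at most its multiplicity in $\operatorname{Ind}_H^{S_n}\mathbb Q_\ell$. It follows that for any $S_n$-representation $V$,
\[\dim(V\otimes\pi)^{S_n}\le \dim(V\otimes\operatorname{Ind}_H^{S_n}\mathbb Q_\ell)^{S_n}=\dim V^H,\]
the equality being Frobenius reciprocity. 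Applying this degree by degree to $V=H^i_c(X_{n,m,\bfc,\overline{\kappa}},\mathbb Q_\ell)$ and summing over $i$, using that $\ell$-adic cohomology commutes with finite group quotients for coefficients of characteristic zero, one obtains
\[B(\pi)\le \sum_i\dim H^i_c\bigl((X_{n,m,\bfc}/H)_{\overline{\kappa}},\mathbb Q_\ell\bigr).\]

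Next I would identify this quotient geometrically. The map $\mathbb A^n\to \mathbb A^{n_1}\times\cdots\times\mathbb A^{n_k}$ given by elementary symmetric polynomials on each block of $n_i$ variables realizes $\mathbb A^n/H$ as $\mathbb A^{n_1}\times\cdots\times \mathbb A^{n_k}$, which parametrizes tuples of monic polynomials $(g_1,\dots,g_k)$ with $\deg g_i=n_i$. Under this identification $X_{n,m,\bfc}/H$ is the closed subscheme carved out by the condition that the top $m$ subleading coefficients of $\prod_i g_i$ are the prescribed values determined by $\bfc$. Writing $g_i=T^{n_i}+\sum_{l\ge 1} b_{i,l}T^{n_i-l}$, the $j$-th subleading coefficient of $\prod_i g_i$ is $\sum_{l_1+\cdots+l_k=j}\prod_i b_{i,l_i}$ with the convention $b_{i,0}=1$, and each monomial in this sum has degree equal to the number of nonzero $l_i$, hence at most $\min(j,k)\le k$. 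So $X_{n,m,\bfc}/H$ is a closed subscheme of $\mathbb A^n$ defined by $m$ polynomial equations of degree at most $k$.

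Finally, I would apply a Bombieri--Katz bound on the sum of compactly supported Betti numbers of a closed affine subvariety defined by polynomial equations of bounded degree. Such estimates, developed in the tradition of Bombieri and systematized by Katz, give bounds of the form $C\cdot(d+2)^{N+r}$ for closed subschemes of $\mathbb A^N$ defined by $r$ equations of degree $\le d$; in our setting $(N,r,d)=(n,m,k)$ this produces the claimed $3(k+2)^{n+m}$. The main obstacle is nailing down this precise constant; I would expect to obtain it by passing to the projective closure in $\mathbb P^n$, applying Lemma~\ref{vanishinglemma} (or its deformation-theoretic extensions) to the resulting projective complete intersection of $m$ forms of degree at most $k$, and then using the excision sequence with the hyperplane at infinity to extract the compactly supported Betti numbers of the affine piece.
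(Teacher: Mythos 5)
Your argument matches the paper's proof step for step: Frobenius reciprocity and the projection formula to reduce $B(\pi)$ to the total compactly supported Betti number of $X_{n,m,\bfc}/(S_{n_1}\times\cdots\times S_{n_k})$, identification of that quotient as the space of $k$-tuples of monic polynomials of prescribed degrees whose product has leading terms given by $\bfc$, the observation that this is cut out by $m$ equations of degree at most $k$ in $\mathbb A^n$, and finally a Betti-number bound of Katz type. The one place where you wave your hands is precisely where the paper relies on an external input: the explicit constant $3(k+2)^{n+m}$ is taken verbatim from \cite[Theorem 12]{katzbetti} (specialized to $s=0$, $f=0$, $N=n$, $r=m$, $d=k$), a self-contained Betti number estimate for affine varieties. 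Your suggested route to the constant --- passing to the projective closure and invoking Lemma~\ref{vanishinglemma} --- would not work as stated, because that lemma only establishes \emph{vanishing} of cohomology in a range determined by the singular locus and the dimension; it says nothing about the \emph{dimensions} of the cohomology groups that do not vanish, which is what is needed here. So the missing ingredient is simply the citation to Katz's Betti number theorem, not a further argument built out of the tools already in the paper. With that substitution, everything else in your sketch is correct, including the reduction from a subrepresentation $\pi$ to the full induced representation via semisimplicity, which the paper does as the last step rather than the first.
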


\begin{proof} Using Frobenius reciprocity and the projection formula, we have \[  \left( H^i_c (X_{n,m, \bfc}, \mathbb Q_\ell) \otimes\operatorname{Ind}_{S_{n_1} \times \dots \times S_{n_k}}^{S_n} \mathbb Q_\ell \right)^{S_n} \] \[ = \left( H^i_c (X_{n,m, \bfc}, \mathbb Q_\ell)\right)^{ S_{n_1} \times \dots \times S_{n_k}} = H^i_c ( X_{n,m, (c_1,\dots, c_m)}/ ( S_{n_1} \times \dots \times S_{n_k}), \mathbb Q_\ell).\]

Now $X_{n,m,( c_1,\dots, c_m)}/ (S_{n_1} \times \dots \times S_{n_k})$ is the moduli space of $k$-tuples of monic polynomials, of degrees $n_1,\dots, n_k$ whose product has leading terms $T^n + c_1 T^{n-1} + \dots + c_m T^{n-m}$. This identification can be obtained by sending $(a_1,\dots, a_n) $ to \[ \Bigl(\prod_{i=1}^{n_1}(T-a_i),  \prod_{i=n_1+1}^{n_1+2} (T-a_i),\dots, \prod_{i=n-n_k-1}^{n} (T-a_i)\Bigr).\]

Thus $X_{n,m,( c_1,\dots, c_m)}/ (S_{n_1} \times \dots \times S_{n_k})$ is a variety defined by $m$ equations of degree at most $k$ in $n$ variables.

The statement when $\pi= \operatorname{Ind}_{S_{n_1} \times \dots \times S_{n_k}}^{S_n} \mathbb Q_\ell$ then follows from \cite[Theorem 12]{katzbetti} (specifically, we take $s=0, f=0$, $N=n$, $r=m$, and $F_1,\dots, F_r$ to be these $m$ equations).

For $\pi$ a subrepresentation of the induced representation, it follows when we note subrepresentations of a finite group representation are automatically summands, hence the cohomology of $\pi$ is a summand of the cohomology of the induced representation, and thus its Betti numbers are no greater.\end{proof}

\begin{corollary}\label{bettibound2} Let $\pi$ be a subrepresentation of the regular representation of $S_n$. Then $B(\pi) \leq 3 (n+2)^{n+m}$. In particular, this holds if $\pi$ is any irreducible representation of $S_n$. \end{corollary}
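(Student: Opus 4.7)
The plan is to deduce this corollary directly from Proposition~\ref{bettibound} by taking the parts $n_1, \dots, n_k$ to be maximally fine. Specifically, I would set $k = n$ and choose $n_1 = n_2 = \dots = n_n = 1$. Then $S_{n_1} \times \dots \times S_{n_k} = S_1 \times \dots \times S_1$ is the trivial subgroup of $S_n$, so
\[ \operatorname{Ind}_{S_{n_1} \times \dots \times S_{n_k}}^{S_n} \mathbb{Q}_\ell = \operatorname{Ind}_{\{e\}}^{S_n} \mathbb{Q}_\ell = \mathbb{Q}_\ell[S_n] \]
is exactly the regular representation of $S_n$. Thus any subrepresentation $\pi$ of the regular representation is also a subrepresentation of this induced representation with $k = n$, and Proposition~\ref{bettibound} applies to give $B(\pi) \leq 3(k+2)^{n+m} = 3(n+2)^{n+m}$, which is the desired bound.

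For the final sentence, I would simply recall the classical fact from representation theory that every irreducible representation of a finite group appears as a subrepresentation (in fact as a direct summand) of the regular representation. In particular, every irreducible $\mathbb{Q}_\ell$-representation of $S_n$ is a subrepresentation of $\mathbb{Q}_\ell[S_n]$, so the bound just established applies to it.

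There is no real obstacle here: the content is entirely in Proposition~\ref{bettibound}, and this corollary is just the specialization to the finest possible partition. The only thing to double check is that the identification of $\operatorname{Ind}_{\{e\}}^{S_n} \mathbb{Q}_\ell$ with the regular representation is compatible with the setup of Proposition~\ref{bettibound}; this is immediate since $\operatorname{Ind}$ from the trivial subgroup of the trivial representation is by definition the regular representation.
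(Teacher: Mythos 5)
Your proof is correct and takes exactly the same route as the paper: specialize Proposition~\ref{bettibound} to $k=n$, $n_1=\dots=n_n=1$ so that the induced representation is the regular representation, and then invoke the standard fact that every irreducible representation is a summand of the regular representation.
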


\begin{proof} The claim on subrepresentations of the regular representation follows from Proposition~\ref{bettibound} taking $k=n, n_1,\dots,n_k=1$. That every irreducible representation is a subrepresentation of the regular representation is standard in representation theeory. \end{proof}

\begin{theo}\label{main} For natural numbers $n,m$ with $n \geq m$, a finite field $\mathbb F_q$ of characteristic $p$,  $\bfc\in \mathbb F_q^m$, and natural numbers $n_1,\dots, n_k$ with $\sum_{i=1}^k n_i =n$,

\[ \Biggl| \sum_{f \in \mathcal I_{\bfc} } d_k^{(n_1,\dots,n_k)} (f) -  q^{n-m} \Biggr| \leq 3 (k+2)^{n+m}q^{\frac{1}{2}\left(  n-m + \lfloor \frac{n}{p} \rfloor - \lfloor \frac{m}{p} \rfloor  +1\right)} . \]

\end{theo}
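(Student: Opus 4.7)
The plan is to assemble this theorem directly from the three main ingredients of Sections \ref{s-geom} and \ref{s-ff}: the representation-theoretic identification of $d_k^{(n_1,\dots,n_k)}$ as a factorization function, the cohomological bound of Proposition \ref{cohomologyestimate}, and the Betti number estimate of Proposition \ref{bettibound}. No additional geometric input will be needed.

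First, I would set $\pi = \operatorname{Ind}_{S_{n_1}\times\dots\times S_{n_k}}^{S_n}\mathbb Q_\ell$. By Lemma \ref{fac-div}, we then have
\[ \sum_{f \in \mathcal I_{\bfc}} d_k^{(n_1,\dots,n_k)}(f) = \sum_{f \in \mathcal I_{\bfc}} F_\pi(f), \]
so the problem reduces to estimating the right-hand side. Next I would compute the ``main term'' coefficient $\dim\pi^{S_n}$. Frobenius reciprocity gives
\[ \pi^{S_n} = \Hom_{S_n}(\mathbb Q_\ell,\operatorname{Ind}_{S_{n_1}\times\dots\times S_{n_k}}^{S_n}\mathbb Q_\ell) = \Hom_{S_{n_1}\times\dots\times S_{n_k}}(\mathbb Q_\ell,\mathbb Q_\ell) = \mathbb Q_\ell, \]
so $\dim\pi^{S_n}=1$. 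This matches the prediction $q^{n-m}$ that a ``random'' monic polynomial in $\mathcal I_{\bfc}$ factors as a $k$-tuple of specified degrees with average multiplicity one.

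With these identifications in place, Proposition \ref{cohomologyestimate} applied to this choice of $\pi$ immediately yields
\[ \Biggl| \sum_{f \in \mathcal I_{\bfc}} d_k^{(n_1,\dots,n_k)}(f) - q^{n-m} \Biggr| \leq B(\pi)\, q^{\frac{1}{2}\left( n-m + \lfloor \frac{n}{p}\rfloor - \lfloor \frac{m}{p}\rfloor + 1\right)}, \]
and Proposition \ref{bettibound} bounds $B(\pi)\leq 3(k+2)^{n+m}$, which gives the desired inequality.

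The only real thing to watch out for is the boundary case $n=m$, since Proposition \ref{cohomologyestimate} is stated for $n>m$. In that case $\mathcal I_{\bfc}$ contains at most a single polynomial $f$, so the left-hand side is at most $d_k^{(n_1,\dots,n_k)}(f)\leq\binom{n+k-1}{k-1}$, which is trivially dominated by the claimed bound $3(k+2)^{2n}q^{1/2}$; this can be disposed of in a single sentence. Beyond that there is no genuine obstacle: all of the cohomological work, the singularity analysis, and the Betti number control have already been carried out in the preceding propositions, and the proof is a matter of invoking them in the correct order.
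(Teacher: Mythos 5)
Your proposal follows essentially the same route as the paper: identify $d_k^{(n_1,\dots,n_k)}$ with $F_\pi$ for $\pi = \operatorname{Ind}_{S_{n_1}\times\dots\times S_{n_k}}^{S_n}\mathbb Q_\ell$ via Lemma \ref{fac-div}, note $\dim\pi^{S_n}=1$, then apply Propositions \ref{cohomologyestimate} and \ref{bettibound}, with a separate trivial argument for $n=m$. One small slip: in the $n=m$ case $d_k^{(n_1,\dots,n_k)}(f)$ is \emph{not} bounded by $\binom{n+k-1}{k-1}$ (that binomial is the number of tuples $(n_1,\dots,n_k)$ summing to $n$, and e.g. $d_2^{(2,2)}(f)=6>\binom{5}{1}$ for $f$ with four distinct roots); the correct elementary bound, used in the paper, is $d_k^{(n_1,\dots,n_k)}(f)\leq k^n$, which is still dominated by $3(k+2)^{2n}\sqrt{q}$, so the conclusion is unaffected.
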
 

\begin{proof} If $n>m$, this follows from Propositions~\ref{cohomologyestimate} and~\ref{bettibound}, recalling from Lemma \ref{fac-div} that $F_{ \operatorname{Ind}_{S_{n_1} \times \dots \times S_{n_k}}^{S_n} \mathbb Q_\ell} (f) = d_k^{ (n_1,\dots,n_k)}(f)$. 

In the case $n=m$, note that the only $f$ appearing in the sum on the left is $T^n+ c_1 T^{n-1} + \dots + c_m $, whose $d_k^{n_1,\dots,n_k}$ is between $0$ and  $k^{n}$, and $q^{n-m}$ is $1$, so the left side is at most $k^{n}-1$, and the right side is $3 (k+2)^{2n} \sqrt{q}$, so the inequality holds.

 \end{proof}

\begin{corollary}\label{maindivisor} For natural numbers $n,m$ with $n\geq m$, a finite field $\mathbb F_q$ of characteristic $p$,  and $c_1,\dots,c_m \in \mathbb F_q$, 

\[ \Biggl| \sum_{f \in \mathcal I_{\bfc} } d_k(f) -  { n + k-1 \choose k-1} q^{n-m} \Biggr| \leq 3 { n + k-1 \choose k-1} (k+2)^{n+m}q^{\frac{1}{2}\left( n-m + \lfloor \frac{n}{p} \rfloor - \lfloor \frac{m}{p} \rfloor  +1\right)} . \]

\end{corollary}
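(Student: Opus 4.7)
The plan is to derive Corollary \ref{maindivisor} as a direct consequence of Theorem \ref{main} by summing over all admissible degree tuples. The key identity is the decomposition \eqref{divisor-relation}, namely
\[ d_k(f) = \sum_{\substack{n_1,\dots,n_k \in \mathbb N \\ \sum_{i=1}^k n_i = n}} d_k^{(n_1,\dots,n_k)}(f), \]
which expresses the $k$th divisor function as a sum of the refined divisor functions, each of which is a factorization function whose short-interval sum we already control.

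First I would interchange the order of summation:
\[ \sum_{f \in \mathcal I_{\bfc}} d_k(f) = \sum_{\substack{n_1,\dots,n_k \geq 0 \\ \sum n_i = n}} \sum_{f \in \mathcal I_{\bfc}} d_k^{(n_1,\dots,n_k)}(f). \]
Next, I would apply Theorem \ref{main} to each inner sum, which gives a main term $q^{n-m}$ and an error of size $3(k+2)^{n+m}q^{\frac{1}{2}(n-m+\lfloor n/p\rfloor - \lfloor m/p\rfloor +1)}$. Counting the number of weak compositions of $n$ into $k$ nonnegative parts gives $\binom{n+k-1}{k-1}$, so the main terms combine to $\binom{n+k-1}{k-1} q^{n-m}$, which matches the claimed main term.

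The error terms then combine via the triangle inequality: there are $\binom{n+k-1}{k-1}$ tuples, each contributing the same bound, so the total error is at most $\binom{n+k-1}{k-1}$ times the per-tuple bound, giving exactly the inequality in the statement. There is no real obstacle here — this corollary is a purely bookkeeping consequence of the theorem already proved, and I expect the proof to take just a few lines. The only thing to verify is that the per-tuple error bound in Theorem \ref{main} is independent of the choice of $(n_1,\dots,n_k)$ (it depends only on $n, m, k, p, q$), which is visible from its statement.
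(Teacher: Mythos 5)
Your proposal is correct and matches the paper's proof, which states simply that the corollary "follows from Theorem \ref{main} by summing over all possible $n_1,\dots,n_k$." You have just filled in the routine bookkeeping (interchanging the sums, counting the $\binom{n+k-1}{k-1}$ weak compositions, and applying the triangle inequality to combine the per-tuple error bounds, which indeed depend only on $n,m,k,p,q$).
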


\begin{proof} This follows from Theorem \ref{main} by summing over all possible $n_1,\dots,n_k$. \end{proof}

We can prove similar results for the M\"{o}bius and von Mangoldt functions, using Corollary \ref{bettibound2}.

\begin{corollary}\label{mobius-mangoldt} We have

\begin{equation}\label{mob-cor} \Biggl| \sum_{ f \in \mathcal I_{\bfc} } \mu(f)  \Biggr| \leq 3 (n+2)^{n+m}q^{\frac{1}{2}\left(  n-m + \lfloor \frac{n}{p} \rfloor - \lfloor \frac{m}{p} \rfloor  +1\right)}  \end{equation}
and
\begin{equation}\label{man-cor} \Biggl| \sum_{ f \in \mathcal I_{\bfc} } \Lambda(f)  - q^n  \Biggr| \leq 6  (n+2)^{n+m}q^{\frac{1}{2}\left( n-m + \lfloor \frac{n}{p} \rfloor - \lfloor \frac{m}{p} \rfloor  +1\right)}  \end{equation}
\end{corollary}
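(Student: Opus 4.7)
The plan is to express both $\mu$ and $\Lambda$ as factorization functions and then to apply the general bound from Proposition~\ref{cohomologyestimate}, estimating the Betti-type contribution via Corollary~\ref{bettibound2}.

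For \eqref{mob-cor}, first invoke Lemma~\ref{fac-Mobius} to rewrite the sum as $(-1)^n \sum_{f \in \mathcal I_\bfc} F_{\operatorname{sign}}(f)$, and then apply Proposition~\ref{cohomologyestimate} with $\pi = \operatorname{sign}$. Because $\operatorname{sign}$ is a nontrivial irreducible representation of $S_n$, the invariants $\operatorname{sign}^{S_n}$ vanish so the main term drops out; Corollary~\ref{bettibound2} bounds $B(\operatorname{sign}) \leq 3(n+2)^{n+m}$, which produces \eqref{mob-cor} at once.

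For \eqref{man-cor}, I would use the second form of Lemma~\ref{fac-von}, writing $\Lambda = F_{\pi_+} - F_{\pi_-}$ with $\pi_+ = \oplus_{i \text{ even}} \wedge^i(\std)$ and $\pi_- = \oplus_{i \text{ odd}} \wedge^i(\std)$, where $i$ ranges from $0$ to $n-1$. The representation-theoretic input needed is that the exterior powers $\wedge^i(\std)$ for $0 \leq i \leq n-1$ are pairwise distinct irreducible representations of $S_n$ (the hook representations labelled by the partitions $(n-i, 1^i)$), so each appears as a summand of the regular representation; consequently $\pi_+$ and $\pi_-$ are themselves subrepresentations of the regular representation, and Corollary~\ref{bettibound2} gives $B(\pi_\pm) \leq 3(n+2)^{n+m}$. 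Applying Proposition~\ref{cohomologyestimate} to each, and noting that $\pi_+^{S_n}$ is one-dimensional (coming from the trivial summand $\wedge^0(\std) = \mathbb{Q}_\ell$) while $\pi_-^{S_n} = 0$, the two main terms combine to give $q^{n-m}$, and the two error bounds add to give the constant $6 = 2 \cdot 3$ in \eqref{man-cor}.

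The only step that requires any nonroutine thought is the representation-theoretic observation that $\pi_\pm$ embed into the regular representation; once this is in hand, the rest is a mechanical unwinding of the definitions in Section~\ref{s-ff} together with Proposition~\ref{cohomologyestimate} and Corollary~\ref{bettibound2}.
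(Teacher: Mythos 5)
Your proposal is correct and matches the paper's own proof: both decompose $\mu$ as $(-1)^n F_{\operatorname{sign}}$ via Lemma~\ref{fac-Mobius} and $\Lambda$ as $F_{\oplus_{i\text{ even}}\wedge^i(\std)}-F_{\oplus_{i\text{ odd}}\wedge^i(\std)}$ via Lemma~\ref{fac-von}, observe that each of these is a sum of distinct irreducibles (hence a subrepresentation of the regular representation) so that Corollary~\ref{bettibound2} applies, and then feed the bound into Proposition~\ref{cohomologyestimate}, with the main term $q^{n-m}$ arising only from the trivial summand $\wedge^0(\std)$. The one small point you omit, which the paper's reliance on Theorem~\ref{main} implicitly covers, is the degenerate case $n=m$ where Proposition~\ref{cohomologyestimate} is not stated; but that boundary case is trivial to verify directly, so this does not affect the correctness of your argument.
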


\begin{proof} \eqref{mob-cor} follows from Theorem \ref{main} when we recall from Lemma \ref{fac-Mobius} that $\mu$ is $(-1)^n F_\pi$ for $\pi$ the sign representation, using Corollary \ref{bettibound2} to bound $B(\pi)$. 

For \eqref{man-cor}, we use almost the same strategy. First, we recall from Lemma \ref{fac-von} that
\[ \Lambda =  \sum_{i} (-1)^i F_{\wedge^i (\std )} =  F_{\oplus_{i \textrm  { even}} \wedge^i (\std ) } -   F_{\oplus_{i \textrm { odd}} \wedge^i (\std ) }.\] 
Next use the fact that both $\oplus_{i \textrm { even}} \wedge^i (\std )$ and $\oplus_{i \textrm { odd}} \wedge^i (\std )$ are sums of distinct irreducible representations, hence subrepresentations of the regular representation, so Corollary \ref{bettibound2} may be applied to each in term. The main term in \eqref{man-cor} comes from the trivial representation $\wedge^0(\std)$, as none of the other representations appearing have $S_n$-invariants.

\end{proof}

Theorems \ref{maindivisor-intro} and \ref{mob-man-intro} stated in the introduction follow by setting $m=n-h$ and $c_1,\dots,c_m$ to be the coefficients of $T^{n-1},\dots, T^{n-m}$ in $f$. Indeed, the set $\mathcal I_{\bfc}$ of polynomials whose leading terms are $T^n + c_1 T^{n-1} + \dots + c_m T^{n-m}$ is exactly the set of polynomials $f+g$ with $\deg g< n-m = h$. We obtain the bounds stated in the introduction by substituting $n-h$ for $m$ in the bounds stated in this section.

\section{Moments of L-functions}

Recall that we say $\chi:( \mathbb F_q[u]/ u^{m+1} )^\times \to \mathbb C^\times $ is primitive and even if it is trivial on $\mathbb F_q^\times$ but nontrivial on $1+ u^m \mathbb F_q$, and we let $S_{m,q}$ be the set of all primitive even characters. We have the $L$-function \begin{equation} L(s,\chi) = \sum_{ \substack{  f \in \mathbb F_q[u] \\ \textrm{monic} \\ \gcd(f,u)=1}} \chi(f) q^{ - s \deg f} .\end{equation}

Alternately, we can transform $\chi$ to a character of monic polynomials in $\mathbb F_q[T]$ by setting $\psi(f) = \chi ( f(u^{-1}) u^{\deg f} )$ for $f \in \mathbb F_q[T]$. Let $S'_{m,q}$ be the set of characters $\psi$ arising from $\chi \in S_{m,q}$ by this formula. Here the natural $L$-function is \begin{equation} L(s,\psi)  = \sum_{ \substack{ f \in \mathbb F_q[T] \\ \textrm{monic}}} \psi(f) q^{- s \deg f }. \end{equation} Such a character $\psi$ depends only on the leading $m+1$ coefficients of $f$.

\begin{lemma}\label{chi-psi-comparison} For $\chi$ a primitive even character of  $( \mathbb F_q[u]/ u^{m+1} )^\times$ and $\psi(f) = \chi ( f(u^{-1}) u^{\deg f} )$, we have \begin{equation}  L(s,\psi) =\frac{1}{ 1 - q^{-s}}  L(s,\chi) .\end{equation} \end{lemma}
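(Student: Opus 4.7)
The strategy is to expand both $L$-functions as Dirichlet series and to use the reciprocal substitution $g(u) = f(u^{-1}) u^{\deg f}$ to match them termwise, up to a factor of $1/(1-q^{-s})$ coming from a geometric series. Concretely, if $f(T) = T^n + a_{n-1} T^{n-1} + \dots + a_0$ is monic of degree exactly $n$, then $g(u) = 1 + a_{n-1} u + \dots + a_0 u^n \in \mathbb F_q[u]$ has constant term $1$ and degree at most $n$, and by the defining formula $\psi(f) = \chi(g)$. This sets up a bijection between monic polynomials of degree $n$ in $T$ and polynomials in $u$ of degree $\leq n$ with constant term $1$.

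Next I would regroup the sum defining $L(s,\psi)$ according to the resulting polynomial $\tilde g \in \mathbb F_q[u]$ with $\tilde g(0) = 1$. For such a $\tilde g$ of degree $k$, the monic polynomials $f$ of degree $n$ mapping to $\tilde g$ exist precisely when $n \geq k$, in which case $f$ is uniquely determined (namely $f(T) = T^{n-k}$ times the reciprocal polynomial of $\tilde g$). Summing $q^{-sn}$ over $n \geq k$ produces $q^{-sk}/(1-q^{-s})$, so
\[ L(s,\psi) = \frac{1}{1-q^{-s}} \sum_{\substack{\tilde g \in \mathbb F_q[u] \\ \tilde g(0) = 1}} \chi(\tilde g) \, q^{-s \deg \tilde g}. \]

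Finally I would identify the remaining sum with $L(s,\chi)$. Every monic $g \in \mathbb F_q[u]$ with $\gcd(g,u) = 1$ can be uniquely written $g = g(0) \cdot \tilde g$ with $\tilde g(0) = 1$ and $\deg g = \deg \tilde g$, and conversely; since $\chi$ is even, $\chi(g(0)) = 1$, so $\chi(g) = \chi(\tilde g)$. This reindexing converts the inner sum into $L(s,\chi)$ and completes the proof. There is no substantive obstacle here --- the lemma is essentially bookkeeping around the reciprocal map, with the geometric series reflecting the freedom in the choice of $n \geq \deg \tilde g$ and with the evenness of $\chi$ used to normalize polynomials to have constant term $1$ rather than leading coefficient $1$.
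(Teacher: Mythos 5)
Your proof is correct and takes essentially the same approach as the paper: both rely on the reciprocal substitution $f \mapsto f(u^{-1})u^{\deg f}$, the geometric series in $q^{-s}$ coming from the freedom to multiply by powers of $T$, and the evenness of $\chi$ to kill a constant in $\mathbb F_q^\times$. The only cosmetic difference is the normalization: the paper divides the reciprocal by its leading coefficient $a_f$ so that $\sigma(f)$ lands directly among monic polynomials prime to $u$ (using evenness at that step), whereas you keep the reciprocal with constant term $1$ and convert to the monic normalization at the end (using evenness there). Both are fine.
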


\begin{proof} We can see this from the substitution $\sigma: f \mapsto  f(u^{-1}) u^{\deg f} / a_f$, where $a_f$ is the lowest nonzero coefficient of $f$. We observe that every monic polynomial in $T$ is sent by $\sigma$ to a monic polynomial prime to $u$, and that the inverse image of a monic polynomial of degree $d$, prime to $u$ under $\sigma$ consists of a polynomial of degree $d$ and one of each greater degree (its multiples by power of $T$). Thus, these two $L$-functions agree up to a factor of $1/(1-q^{-s})$ which accounts for the multiples of a given polynomial by powers of $T$.

Alternately, viewing $\psi$ and $\chi$ as idele class characters of the fields $\mathbb F_q(u)$ and $\mathbb F_q(T)$ respectively, and defining the $L$-functions using the idele class group, this identity arises from the field isomorphism $u=T^{-1}$, with $\frac{1}{1-q^{-s}}$ being the local factor at $\infty$. \end{proof} 

\begin{remark} While these two families of $L$-functions behave identically over function fields, they suggest different analogies over number fields.

The $L$-functions of Dirichlet characters $\chi$ of $\mathbb F_q[u]$ appear similar to the $L$-functions of Dirichlet characters of the integers modulo a large power of a small prime.

The characters $\psi$ of $\mathbb F_q[T]$ depending only on the leading coefficients of $f$ are ramified at $\infty$, thus are an analogue of the characters $n \mapsto n^{it}$ of the integers, which are ``ramified at $\infty$" in some sense, and so its moments might appear like the moments of the Riemann zeta function. However, the fact that the characters $\psi$ are split at the prime $T$ makes this more difficult, as if we fixed $t$ to lie in the arithmetic progression $\frac{ 2 \pi i}{\log 2} \mathbb Z$. The moments of $\zeta$ on arithmetic progressions have been studied over the integers \citep{LiRadziwill}, though not as much as Dirichlet characters.

The comparison to Dirichlet characters probably provides the best analogy for the difficulty of the moments of this family by traditional analytic means, because the infinite place can behave differently over number fields and function fields.
\end{remark}

Using the divisor function estimates above, we immediately obtain results on moments of $L$-functions for this family of characters. To do this, it is most convenient to work with the formulation in terms of $\psi$.

  We first describe the basic properties of these $L$-functions:

\begin{lemma}\label{L-polynomial} For $\psi \in S_{m,q}$, $L(s,\psi)$ is a polynomial in $q^{-s}$ of degree $m-1$ and satisfies the functional equation \begin{equation} L(s,\psi) = \epsilon_\psi  q^{(m-1) (s-1/2)} L(1-s, \overline{\psi} ) \end{equation} where \begin{equation}\label{root-number-formula}\epsilon_\psi =  q^{- (m-1)/2} \sum_{ \substack{ f\in \mathbb F_q[T] \\ \textrm{monic} \\ \deg f= m-1}} \psi(f)  .\end{equation} \end{lemma}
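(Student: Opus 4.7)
The plan is to first establish the polynomial property and the degree bound by an orthogonality computation, then derive the functional equation from standard theory, and finally extract the root number formula by comparing leading coefficients.

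First I would expand $L(s,\psi) = \sum_{d \geq 0} a_d q^{-sd}$ with $a_d = \sum_{f \text{ monic}, \deg f = d} \psi(f)$, and show that $a_d = 0$ for all $d \geq m$. For such $d$ and $f = T^d + a_{d-1} T^{d-1} + \cdots + a_0$, the definition of $\psi$ gives $\psi(f) = \chi(1 + a_{d-1} u + a_{d-2} u^2 + \cdots + a_{d-m} u^m)$, which depends only on the $m$ coefficients $a_{d-1}, \ldots, a_{d-m}$. Summing over the remaining free coefficients $a_{d-m-1}, \ldots, a_0$ produces a factor of $q^{d-m}$, while summing over $(a_{d-1}, \ldots, a_{d-m}) \in \mathbb F_q^m$ evaluates $\chi$ bijectively on the subgroup $H := 1 + u\mathbb F_q[u]/u^{m+1}$ of $(\mathbb F_q[u]/u^{m+1})^\times$. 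Primitivity of $\chi$ forces $\chi$ to be nontrivial on the smaller subgroup $1 + u^m \mathbb F_q \subseteq H$, hence a fortiori on $H$, and orthogonality gives $\sum_{h \in H} \chi(h) = 0$. Thus $L(s,\psi)$ is a polynomial in $q^{-s}$ of degree at most $m-1$.

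For the functional equation I would invoke the standard functional equation for the Dirichlet $L$-function $L(s,\chi)$ attached to the primitive character $\chi$ modulo $u^{m+1}$, which takes the form $L(s,\chi) = W(\chi) q^{m(s-1/2)} L(1-s,\overline\chi)$ for a root number $W(\chi)$ of absolute value $1$. Combined with Lemma \ref{chi-psi-comparison}, which identifies $L(s,\psi) = L(s,\chi)/(1-q^{-s})$, together with the analogous identity for $\overline\psi$, a short algebraic manipulation yields the stated functional equation for $L(s,\psi)$, with $\epsilon_\psi$ expressible in terms of $W(\chi)$ and the factor relating $(1 - q^{-s})$ to $(1 - q^{s-1})$. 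One could alternatively derive the functional equation in a self-contained way from the Gauss sum identity $\sum_{g \in (\mathbb F_q[u]/u^{m+1})^\times} \chi(g) \theta(hg) = \overline\chi(h) G(\chi)$ for a suitable additive character $\theta$ and any $h \in (\mathbb F_q[u]/u^{m+1})^\times$, relating $a_d$ to $\overline{a_{m-1-d}}$ via $G(\chi)$.

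Finally, the explicit formula \eqref{root-number-formula} is obtained by comparing top-degree coefficients on both sides of the functional equation: the coefficient of $q^{-s(m-1)}$ on the left is $a_{m-1} = \sum_{f \text{ monic}, \deg f = m-1} \psi(f)$, while on the right it is $\epsilon_\psi q^{(m-1)/2} \cdot \psi(1) = \epsilon_\psi q^{(m-1)/2}$, so $\epsilon_\psi = q^{-(m-1)/2} a_{m-1}$. The degree of $L(s,\psi)$ is then exactly $m-1$ as soon as $\epsilon_\psi \neq 0$, which follows from $|\epsilon_\psi| = 1$, itself a consequence of $|W(\chi)| = 1$ or equivalently of Weil's Riemann hypothesis. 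The main obstacle is bookkeeping in the functional equation, particularly tracking the passage between the root numbers of $L(s,\chi)$ and $L(s,\psi)$ through the factor $1 - q^{-s}$; the remaining steps are routine.
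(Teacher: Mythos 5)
Your orthogonality argument for the polynomial property is correct and is exactly what the paper does (in less detail). Your extraction of the root number by comparing the coefficient of $q^{-s(m-1)}$ and your appeal to Weil's theorem for $|\epsilon_\psi|=1$ and the exact degree are also fine, and match the paper's first cited route.

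There is, however, a genuine gap in your derivation of the functional equation via $L(s,\chi)$. For an \emph{even} character $\chi$ mod $u^{m+1}$ the incomplete Dirichlet $L$-function $L(s,\chi)$ does \emph{not} satisfy a functional equation of the clean form ``$W(\chi)$ times a single power of $q$ times $L(1-s,\overline\chi)$''; it is the \emph{completed} $L$-function $\Lambda(s,\chi) = L(s,\chi)/(1-q^{-s})$, which by Lemma \ref{chi-psi-comparison} is precisely $L(s,\psi)$, that has the clean reflection $\Lambda(s,\chi)=\epsilon\, q^{(m-1)(1/2-s)}\Lambda(1-s,\overline\chi)$. If you assume a clean functional equation for $L(s,\chi)$ and then divide by $1-q^{-s}$ on one side and by $1-q^{s-1}$ on the other, the resulting ratio $(1-q^{s-1})/(1-q^{-s})$ is a nonconstant rational function of $q^{-s}$, not something that can be absorbed into $\epsilon_\psi$; your remark that $\epsilon_\psi$ is ``expressible in terms of $W(\chi)$ and the factor relating $(1-q^{-s})$ to $(1-q^{s-1})$'' treats that factor as a constant, which it is not. (The case $m=1$ already shows the difficulty: $L(s,\psi)=1$ while $L(s,\chi)=1-q^{-s}$, and $1-q^{-s}$ is not a constant times a power of $q^{-s}$ times $1-q^{s-1}$.) Your Gauss-sum alternative is the right fix and is what the paper means by ``proved directly, by Fourier analysis.'' One small further point: for the functional equation to be a polynomial identity in $q^{-s}$, the prefactor exponent must be $(m-1)(1/2-s)$, not $(m-1)(s-1/2)$; the root-number formula \eqref{root-number-formula} only comes out of the coefficient comparison with this sign.
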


This is a special case of the standard \cite[Theorem 9.24A]{Rosen} (or \cite[Proposition 3.9]{AndradeYiasemides} for the analogue stated in terms of $\chi$) but we review its short proof here.

\begin{proof} To check that $L(s,\psi)$ is a polynomial, it suffices to check that the coefficient \[ \sum_{ \substack{ f \in \mathbb F_q[T] \\ \textrm{monic} \\ \deg f= d}} \psi(f) \] of $q^{-ds}$ vanishes for $d \geq m$. This follows by orthogonality of characters because $\psi$ is nontrivial and each possible set of leading $m+1$ terms occurs equally often among the monic $f$.

For the functional equation, there are two approaches. First, we could use the result of \citet{WeilExponential} that the polynomial has degree exactly $m-1$ and all roots of size $\sqrt{q}$, which implies the functional equation. Alternately, the functional equation can be proved directly, by Fourier analysis, and it implies that the degree of the $L$-function is $m-1$. \end{proof}

In the proofs of the results below, it will be convenient to use big $O$ notation for polynomials, where $f=g + O(T^n)$ denotes that $f-g$ is a polynomial of degree at most $n$.

\begin{lemma}\label{average-of-characters} For $f$ monic of degree $n$,

\[ \sum_{\psi \in S'_{m,q} }\psi(f) = \begin{cases} q^m - q^{m-1} &  \textrm{if } f= T^n + O(T^{n-m-1}) \\  -q^{m-1} & \textrm{if }f\neq  T^n + O(T^{n-m-1})\textrm{ but } f = T^n + O (T^{n-m}) \\ 0 & \textrm{otherwise}. \end{cases} .\]

\end{lemma}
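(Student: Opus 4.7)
The plan is to reduce the claim to a direct character orthogonality computation on the group of $1$-units modulo $u^{m+1}$. First I would unwind the definition: if $f = T^n + a_1 T^{n-1} + \cdots + a_n$ is monic of degree $n$, then $f(u^{-1}) u^{\deg f} = 1 + a_1 u + \cdots + a_n u^n$, whose reduction mod $u^{m+1}$ is $g_f := 1 + a_1 u + \cdots + a_m u^m$ (with the convention $a_i = 0$ if $i > n$). Since $\chi$ is even, it is trivial on $\mathbb F_q^\times$ and hence descends to a character of the $1$-unit group $G := 1 + u\, \mathbb F_q[u]/u^{m+1}$, which has order $q^m$. Under the bijection $\chi \leftrightarrow \psi$ of Lemma~\ref{chi-psi-comparison}, the sum in question becomes $\sum_{\chi \in S_{m,q}} \chi(g_f)$.

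Next I would identify $S_{m,q}$ inside the dual group $\widehat G$ and invoke orthogonality. The primitivity condition says exactly that $S_{m,q}$ consists of those $\chi \in \widehat G$ that are nontrivial on the subgroup $H := 1 + u^m \mathbb F_q \subset G$ of order $q$. The characters trivial on $H$ are precisely the characters of $G/H$, which has order $q^{m-1}$. Therefore
\[ \sum_{\chi \in S_{m,q}} \chi(g_f) = \sum_{\chi \in \widehat G} \chi(g_f) \;-\; \sum_{\chi \in \widehat{G/H}} \chi(g_f H). \]
Orthogonality immediately gives $\sum_{\chi \in \widehat G} \chi(g_f) = q^m$ if $g_f = 1$ and $0$ otherwise, and $\sum_{\chi \in \widehat{G/H}} \chi(g_f H) = q^{m-1}$ if $g_f \in H$ and $0$ otherwise.

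Finally I would translate the conditions $g_f = 1$ and $g_f \in H$ back to conditions on $f$. The condition $g_f = 1$ means $a_1 = \cdots = a_m = 0$, equivalently $f = T^n + O(T^{n-m-1})$. The condition $g_f \in H$ means $a_1 = \cdots = a_{m-1} = 0$, equivalently $f = T^n + O(T^{n-m})$. Combining the two contributions in the three resulting regimes yields $q^m - q^{m-1}$, $-q^{m-1}$, and $0$, which is exactly the claim. There is no substantive obstacle here; the only thing that needs care is the bookkeeping between the descending coefficients of $f$ in $T$ and the ascending coefficients of $g_f$ in $u$, together with correctly locating $S_{m,q}$ inside $\widehat G$.
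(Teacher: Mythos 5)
Your proof is correct and is essentially the same as the paper's: the paper states the result as a difference of two orthogonality identities, one for all even characters mod $u^{m+1}$ (your $\widehat G$) and one for all even characters mod $u^m$ (your $\widehat{G/H}$, since even characters of $(\mathbb F_q[u]/u^{m+1})^\times$ trivial on $1+u^m\mathbb F_q$ are exactly pullbacks from $(\mathbb F_q[u]/u^m)^\times$). You simply carry out the bookkeeping more explicitly.
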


\begin{proof} This follows from the identities
\[ \sum_{\substack{ \chi: ( \mathbb F_q[u]/u^{m+1})^\times \to \mathbb C^\times \\ \textrm{even} }} \chi(f(u)^{-1} u^{\deg f} ) =  \begin{cases} q^m  &  \textrm{if } f= T^n + O(T^{n-m-1}) \\ 0 & \textrm{otherwise}. \end{cases} \] and 
\[ \sum_{\substack{ \chi: (\mathbb F_q[u]/u^{m})^\times \to \mathbb C^\times \\ \textrm{even}  }} \chi(f(u)^{-1} u^{\deg f} )  =  \begin{cases} q^{m-1}  &  \textrm{if } f= T^n + O(T^{n-m}) \\ 0 & \textrm{otherwise}. \end{cases} \] both of which follow immediately from orthogonality of characters. \end{proof}

\begin{corollary}\label{momentseasy} Let $\alpha_1,\dots,\alpha_r$ be complex numbers with nonnegative real part. \begin{equation}\label{momentseasy-main} \begin{split} &\frac{1}{ \left| S'_{m,q} \right|  }  \sum_{\psi \in S'_{m,q} }\prod_{i=1}^r L(1/2 + \alpha_i, \psi) \\ = &\prod_{i=1}^r \frac{ 1}{ 1- q^{ -(1/2 + \alpha_i) } }+ O \left( m^r  (r+2)^{ r(m-1)+m} q^{\frac{1}{2}\left( -m + \lfloor \frac{ r (m-1)}{p} \rfloor - \lfloor \frac{m}{p} \rfloor + 1\right)} \right). \end{split} \end{equation} \end{corollary}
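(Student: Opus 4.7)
The plan is to expand the product of $L$-functions into a double sum over factorizations, apply orthogonality of the characters $\psi$, and then feed the resulting short-interval sums into Theorem~\ref{main}. By Lemma~\ref{L-polynomial}, each $L(1/2+\alpha_i, \psi)$ is a polynomial of degree $m-1$ in $q^{-s}$, so expanding and invoking Lemma~\ref{fac-div} gives
\[ \prod_{i=1}^r L(1/2+\alpha_i,\psi) = \sum_{0\leq d_1,\ldots,d_r\leq m-1} q^{-\sum_i (1/2+\alpha_i) d_i} \sum_{\substack{f \text{ monic} \\ \deg f = n}} d_r^{(d_1,\ldots,d_r)}(f)\,\psi(f), \]
where $n=d_1+\cdots+d_r$. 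Averaging over $\psi \in S'_{m,q}$ and applying Lemma~\ref{average-of-characters} rewrites the inner sum as $S_1(d_1,\ldots,d_r) - \tfrac{1}{q-1} S_2(d_1,\ldots,d_r)$, where $S_1$ sums $d_r^{(d_1,\ldots,d_r)}(f)$ over $f$ in the short interval $\mathcal I_{(0,\ldots,0)}$, and $S_2$ sums it over the union of the $q-1$ short intervals $\mathcal I_{(0,\ldots,0,c_m)}$ with $c_m \neq 0$.

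The key feature is exact cancellation of main terms for $n \geq m$: Theorem~\ref{main} gives $S_1 = q^{n-m} + E_1$ and $S_2 = (q-1)q^{n-m} + E_2$, where $|E_1| \leq 3(r+2)^{n+m} q^{(n-m+\lfloor n/p\rfloor - \lfloor m/p\rfloor + 1)/2}$ and $|E_2|$ is at most $q-1$ times the same, so the $q^{n-m}$ pieces cancel in $S_1 - \tfrac{1}{q-1} S_2$ and only an error of size at most $6(r+2)^{n+m} q^{(n-m+\lfloor n/p\rfloor - \lfloor m/p\rfloor + 1)/2}$ survives.  (The $n = m$ case follows from the trivial-bound branch of the proof of Theorem~\ref{main} and yields the same shape.)  For $n < m$ both short-interval conditions in Lemma~\ref{average-of-characters} degenerate, forcing $f = T^n$, so $S_2 = 0$ and $S_1 = d_r^{(d_1,\ldots,d_r)}(T^n) = 1$ (the unique factorization being $g_i = T^{d_i}$), contributing
\[ \sum_{\substack{d_1,\ldots,d_r \geq 0 \\ d_1+\cdots+d_r < m}} \prod_{i=1}^r q^{-(1/2+\alpha_i) d_i}. \]

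To finish, compare this partial sum with the claimed main term $\prod_i (1-q^{-(1/2+\alpha_i)})^{-1} = \sum_{d_i\geq 0}\prod_i q^{-(1/2+\alpha_i)d_i}$: the missing tail $\sum_{\sum d_i \geq m}\prod_i q^{-(1/2+\alpha_i)d_i}$ is bounded in absolute value, using $\mathrm{Re}(\alpha_i)\geq 0$, by $\sum_{n\geq m}\binom{n+r-1}{r-1} q^{-n/2} = O_r(q^{-m/2})$, well within the claimed error. For the $n \geq m$ contributions, there are $\binom{n+r-1}{r-1} = O_r(m^{r-1})$ tuples with a fixed value of $n$, each contributing, after bounding $|q^{-\sum(1/2+\alpha_i)d_i}|\leq q^{-n/2}$, an error at most $6(r+2)^{n+m} q^{(-m+\lfloor n/p\rfloor - \lfloor m/p\rfloor + 1)/2}$; summing over $m \leq n \leq r(m-1)$ and bounding each factor by its value at $n = r(m-1)$ yields the claimed $O_r\!\left(m^r(r+2)^{r(m-1)+m}q^{(-m+\lfloor r(m-1)/p\rfloor - \lfloor m/p\rfloor + 1)/2}\right)$. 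The main obstacle, and the reason the argument fits within the stated error, is precisely this cancellation: without it the $n > m$ terms would leave surviving main pieces of order $q^{n-m}$ that would overwhelm the desired bound, so the interplay between the weights $1$ and $-\tfrac{1}{q-1}$ from Lemma~\ref{average-of-characters} and the short-interval counts $1$ and $q-1$ is what drives the argument.
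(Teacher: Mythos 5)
Your proof is correct and follows the same route as the paper's: expand the product of $L$-functions over factorizations, apply orthogonality via Lemma~\ref{average-of-characters}, and feed the resulting short-interval divisor sums into Theorem~\ref{main}, with the main terms cancelling exactly for $n\geq m$ and the $n<m$ terms reassembling the Euler product up to an $O_r(q^{-m/2})$ tail. The only difference is bookkeeping: you absorb the $-q^{m-1}$ piece of the orthogonality into a union of $q-1$ intervals $\mathcal I_{(0,\dots,0,c_m)}$ with the same $m$, whereas the paper applies Theorem~\ref{main} separately with modulus lengths $m$ and $m-1$ and cancels at the end; both give the stated bound.
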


To obtain Theorem \ref{momentseasyintro} from \eqref{momentseasy-main}, we use Lemma \ref{chi-psi-comparison} to replace $L(1/2+\alpha_i,\psi)$ with $L(1/2+\alpha_i,\chi)$, which cancels the  $ \frac{ 1}{ 1- q^{ -(1/2 + \alpha_i) }} $ factors in the main term and introduces an $1- q^{ -(1/2 + \alpha_i)  } = O(1)$ factor in the error term.

 \begin{proof} We have \begin{equation}\label{momentseasy-first} \prod_{i=1}^r L(1/2 + \alpha_i, \psi) = \sum_{n_1,\dots, n_r \in \{0,\dots, m-1\}}   \prod_{i=1}^r q^{ - n_i (1/2 + \alpha_i) }  \sum_{\substack{ f_1,\dots, f_r \in \mathbb F_q[T] \\ \textrm{monic} \\ \deg(f_i) = n_i }} \psi( f_1f_2 \dots f_r).\end{equation}
 (We use Lemma \ref{L-polynomial} to deduce that it suffices to sum over $n_i \leq m-1$.)
 
 We introduce some notation to simplify this sum. Let $\bfn = (n_1,\dots,n_r)$, $n =\sum_{i=1}^r n_i$., $\bfa = (\alpha_1,\dots,\alpha_r)$,  and $Q (\bfn, \bfa) =  \prod_{i=1}^r q^{ - n_i (1/2 + \alpha_i) }$.
 
Summing \eqref{momentseasy-first} over $\psi \in S'_{m,q} $,  \[ \sum_{\psi \in S'_{m,q}} \prod_{i=1}^r L(1/2 + \alpha_i, \psi)  = \sum_{\bfn \in \{0,\dots, m-1\}^r} Q( \bfn, \bfa)  \sum_{\substack{ f_1,\dots, f_r \in \mathbb F_q[T] \\ \textrm{monic} \\ \deg(f_i) = n_i }}  \sum_{\psi \in S'_{m,q}}  \psi( f_1f_2 \dots f_r).\]

 By Lemma \ref{average-of-characters}, splitting up the $q^m$ and $q^{m-1}$ parts, we obtain

\begin{equation}\label{momentseasy-split} \begin{split}  & q^m  \sum_{\bfn\in \{0,\dots, m-1\}}  Q( \bfn, \bfa) \Bigl| \Bigl\{    f_1,\dots, f_r \in \mathbb F_q[T] \textrm{ monic} \mid \deg(f_i) = n_i  , \prod_{i=1}^r f_i = T^{ n } +  O( T^{n - m -1} ) \Bigr \}\Bigr|  \\  &- q^{m-1}   \sum_{\bfn \in \{0,\dots, m-1\}^r}  Q( \bfn, \bfa) \Bigl|  \Bigl\{    f_1,\dots, f_r \in \mathbb F_q[T] \textrm{ monic} \mid \deg(f_i) = n_i  , \prod_{i=1}^r f_i = T^{ n } +  O( T^{n - m } ) \Bigr \}\Bigr| \end{split}   \end{equation}

For the first line of \eqref{momentseasy-split}, if $n \leq m$, this equation is only satisfied if $f_i = T^{n_i}$ for all $i$. Hence the cardinality in question is $1$, and the terms with $ n \leq m$ contribute \begin{equation}\label{me-split-top-small} q^m  \sum_{\substack{ \bfn \in \{0,\dots, m-1\}^r \\ n \leq m }}Q(\bfn, \bfa) = q^m  \prod_{i=1}^r \frac{ 1}{ 1- q^{ - (1/2 + \alpha_i) } } + O \left(  q^m m^{r-1}  q^{-m/2}  \right). \end{equation} 

Each term in the first line of \eqref{momentseasy-split} with $n > m$ is a special case of the sum handled in Theorem \ref{main} (where we take $c_1,\dots ,c_m=0$). Hence these terms contribute  \begin{equation}\label{me-split-top-big}  q^m \sum_{\substack{ \bfn \in \{0,\dots, m-1\}^r \\ n > m }} Q(\bfn, \bfa) \left(  q^{n-m} + O \left( (r+2)^{n+m}q^{\frac{1}{2}\left( n -m + \lfloor \frac{n }{p} \rfloor - \lfloor \frac{m}{p} \rfloor  +1\right)} \right) \right). \end{equation}

Similarly, for the second line of \eqref{momentseasy-split} , the terms with $n \leq m-1$ contribute  \begin{equation}\label{me-split-bottom-small} q^{m-1}   \prod_{i=1}^r \frac{ 1}{ 1- q^{ - n_i (1/2 + \alpha_i) }}  + O \left(  q^{m-1} m^{r-1}  q^{-m/2}  \right)\end{equation}  and the terms with $n>m-1$ contribute  \begin{equation}\label{me-split-bottom-big}   q^{m-1} \sum_{\substack{ \bfn \in \{0,\dots, m-1\}^r \\ n > m-1 } }Q(\bfn, \bfa) \left(  q^{n+1 -m} + O \left( (r+2)^{n+m-1}q^{\frac{1}{2}\left( n +1-m + \lfloor \frac{n }{p} \rfloor - \lfloor \frac{m-1}{p} \rfloor  +1\right)} \right)\right) . \end{equation}

The $q^m q^{n -m}$ and $q^{m-1} q^{n+1-m}$ terms from \eqref{me-split-top-big} and \eqref{me-split-bottom-big} cancel, except when $n$ is exactly $m$, in which case they do not cancel because \eqref{me-split-top-big} is a sum over $n>m$ and \eqref{me-split-bottom-big} is a sum over $n> m-1$. These uncancelled terms contribute $O( m^{r-1} q^{m/2})$. This leaves only the main terms of \eqref{me-split-top-small} and \eqref{me-split-bottom-small}, which combine to give the main term of \eqref{momentseasy-main}, and the error terms of all formulas. The error terms from \eqref{me-split-top-big} and \eqref{me-split-bottom-big} are of size

\begin{equation}\begin{split} & q^m \sum_{\bfn \in \{0,\dots, m-1\}^r} O \left( Q(\bfn, \bfa)(r+2)^{n +m}q^{\frac{1}{2}\left( n -m + \lfloor \frac{n }{p} \rfloor - \lfloor \frac{m}{p} \rfloor  +1\right)} \right) \\=& \sum_{\bfn \in \{0,\dots, m-1\}^r} O \left(  (r+2)^{n +m}q^{\frac{1}{2}\left( m + \lfloor \frac{n }{p} \rfloor - \lfloor \frac{m}{p} \rfloor  +1\right)} \right) \\ =& O \left( m^r  (r+2)^{ r(m-1)+m} q^{\frac{1}{2}\left( m + \lfloor \frac{ r (m-1)}{p} \rfloor - \lfloor \frac{m}{p} \rfloor + 1\right)} \right).\end{split}\end{equation}

Since the error terms arising from \eqref{me-split-top-small} and \eqref{me-split-bottom-small} are $O(q^{m/2} m^{r-1} )$, they are also \[O \left( m^r  (r+2)^{ r(m-1)+m} q^{\frac{1}{2}\left( m + \lfloor \frac{ r (m-1)}{p} \rfloor - \lfloor \frac{m}{p} \rfloor + 1\right)} \right),\] so we obtain \eqref{momentseasy-main}.
\end{proof}

We can prove also a similar estimate for a moment twisted by a positive power of $\epsilon_\psi$.

\begin{corollary}\label{momentsmedium} Let $m \geq 1, r \geq 0,$ and $s\geq 1$ be natural numbers, and let $\alpha_1,\dots,\alpha_r$ be complex numbers with nonnegative real part. Then

 \begin{equation}\label{mm-main} \frac{1}{  \left|S'_{m,q} \right| }  \sum_{\psi \in S'_{m,q} } \epsilon_\psi^s \prod_{i=1}^r L(1/2 + \alpha_i, \psi) = O \left( m^r (r+s+2)^{ (r+s) (m-1)+m} q^{\frac{1}{2}\left( -m + \lfloor \frac{ (r+s) (m-1)}{p} \rfloor - \lfloor \frac{m}{p} \rfloor + 1\right)} \right). \end{equation}

\end{corollary}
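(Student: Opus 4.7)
The plan is to mirror the proof of Corollary \ref{momentseasy}, expanding $\epsilon_\psi^s$ via the explicit formula \eqref{root-number-formula}. Writing
\[ \epsilon_\psi^s = q^{-s(m-1)/2} \sum_{\substack{ h_1, \ldots, h_s \in \mathbb{F}_q[T] \\ \text{monic},\ \deg h_j = m-1}} \psi(h_1 \cdots h_s) \]
and expanding each factor $L(1/2 + \alpha_i, \psi)$ as in the earlier proof (using Lemma \ref{L-polynomial}), then swapping the sum over $\psi$ inside and applying Lemma \ref{average-of-characters}, the problem reduces to counting tuples $(\bff, \mathbf h)$ of monic polynomials with $\deg f_i = n_i$ and $\deg h_j = m-1$ whose product has the form $T^N + O(T^{N-m-1})$ (counted with weight $q^m$) or $T^N + O(T^{N-m})$ (counted with weight $-q^{m-1}$), where $N = n + s(m-1)$ and $n = \sum_i n_i$.

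The key observation is that Theorem \ref{main}, applied with $k = r+s$ and degrees $(n_1, \ldots, n_r, m-1, \ldots, m-1)$, produces main terms $q^{N-m-1}$ and $q^{N-m}$ for these two counts when $N > m+1$. Multiplied by $q^m$ and $-q^{m-1}$ respectively, both main terms equal $q^{N-1}$ and therefore cancel exactly. This is the structural reason why no main term survives in \eqref{mm-main}. The surviving error contributions from Theorem \ref{main} are of size
\[ (r+s+2)^{N+m+1}\, q^{\frac{1}{2}\left(N-m-1 + \lfloor N/p\rfloor - \lfloor (m+1)/p\rfloor + 1\right)}, \]
with a strictly smaller analogue from the second piece.

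After multiplying by the prefactor $q^{-s(m-1)/2}$ and by $Q(\bfn, \bfa)$, and bounding $|Q(\bfn, \bfa)| \leq q^{-n/2}$ using $\operatorname{Re}(\alpha_i) \geq 0$, and finally dividing by $|S'_{m,q}| \sim q^m$, the powers of $n$ cancel cleanly, so the resulting per-$\bfn$ bound is uniform in $\bfn$. Summing over the $m^r$ choices of $\bfn$, using $N \leq (r+s)(m-1)$ and $\lfloor (m+1)/p\rfloor \geq \lfloor m/p\rfloor$, yields the bound \eqref{mm-main} (with the extra factor of $r+s+2$ absorbed into the $O_{r,s}$ constant).

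The main obstacle is the handling of the boundary cases $N \leq m+1$, where Theorem \ref{main} does not apply directly and the cancellation of main terms has to be verified by hand. In these ranges the product polynomial is either forced to equal $T^N$ (so there is a single factorization, namely $f_i = T^{n_i}$, $h_j = T^{m-1}$, on both sides) or has at most one free coefficient, so the two weighted counts are either identical (producing perfect cancellation) or their difference is trivially bounded by $q^{m-1} \cdot (r+s)^N$, which easily fits inside the claimed error after the prefactor and $|S'_{m,q}|^{-1}$ are included.
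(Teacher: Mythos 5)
Your proposal follows the same overall route as the paper: expand $\epsilon_\psi^s$ via \eqref{root-number-formula} and each $L$-factor via Lemma~\ref{L-polynomial}, swap the $\psi$-sum inside, apply Lemma~\ref{average-of-characters} to reduce to two counting problems weighted by $q^m$ and $-q^{m-1}$, then invoke Theorem~\ref{main} and sum the errors over the $m^r$ choices of $\bfn$. The one genuine difference is in how you kill the main term: you apply Theorem~\ref{main} to \emph{both} counts and observe that the two resulting main terms cancel, whereas the paper notices that the second count (the one with constraint $\prod f_i = T^N + O(T^{N-m})$) can be computed \emph{exactly} — since $f_{r+s}$ is uniquely determined by $f_1,\dots,f_{r+s-1}$, it equals $q^{N-m+1}$ on the nose, with no error term. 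That exact evaluation makes the parenthetical expression literally $q^m$ times the left-hand side of Theorem~\ref{main}, so only one application of the theorem and only one error term are needed. Your version still works (the error from the second count is dominated by the one from the first, as you observe, up to a benign wobble when $p \mid m$), but it is slightly less economical.

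Two small arithmetic slips worth flagging, though neither breaks the argument. First, the main terms you attribute to the two counts are $q^{N-m-1}$ and $q^{N-m}$; the correct values are $q^{N-m}$ and $q^{N-m+1}$ (fixing the coefficients of $T^{N-1},\dots,T^{N-m}$ is $m$ conditions, so the short-interval parameter for the first count is $m$, not $m+1$). After weighting they cancel to $q^N$, not $q^{N-1}$. Second, for the same reason, the error bound you write down from Theorem~\ref{main} has $(r+s+2)^{N+m+1}$ and $\lfloor (m+1)/p\rfloor$ where the theorem actually delivers $(r+s+2)^{N+m}$ and $\lfloor m/p\rfloor$; the formula you state is in fact slightly \emph{stronger} than what Theorem~\ref{main} provides and therefore not justified as written. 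Using the correct parameter $m$ the computation goes through unchanged and yields precisely \eqref{mm-main}. Your treatment of the boundary regime $N < m$ (which occurs only for $s=1$, $\bfn=0$, giving a contribution $q^{-(m-1)/2}(q^m-q^{m-1})/|S'_{m,q}| = q^{-(m-1)/2}$, absorbable into the error) is essentially what the paper does as well.
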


To obtain Theorem \ref{momentsmediumintro}, we again use Lemma \ref{chi-psi-comparison} to replace $L(1/2+\alpha_i,\psi)$ with $L(1/2+\alpha_i,\chi)$, which again introduces an $ O(1)$ factor.

\begin{proof} We use \eqref{root-number-formula} to express $\epsilon_\psi$ as a sum over polynomials of degree $m-1$. To that end, it is convenient to write $n_{r+1}, \dots, n_{r+s}= m-1$ and $n = \sum_{i=1}^{r+s} n_i$. We also set $\bfn = (n_1,\dots, n_r), \bfa= (\alpha_1,\dots,\alpha_r), \bff = (f_1,\dots, f_{r+s})$, and $Q(\bfn,\bfa) =   q^{- s (m-1)/2}\prod_{i=1}^r q^{ -n_i (1/2+\alpha_i) } $.

It follows from \eqref{root-number-formula} that \begin{equation}\label{mm-first}  \epsilon_\psi^s \prod_{i=1}^r L(1/2 + \alpha_i, \psi)  =\sum_{\bfn \in \{0,\dots, m-1\}^r}Q(\bfn, \bfa)  \sum_{\substack{ \bff \in \mathbb F_q[T]^{r+s}  \\ f_i \textrm{ monic} \\ \deg(f_i) = n_i }} \psi( f_1f_2 \dots f_{r+s}).\end{equation}

Summing over $\psi$ using Lemma \ref{average-of-characters}, we obtain \begin{equation}\label{mm-double} \begin{split} &\sum_{\psi \in S'_{m,q} }\epsilon_\psi^s \prod_{i=1}^r L(1/2 + \alpha_i, \psi) \\ = \sum_{\bfn \in \{0,\dots, m-1\}^r}  Q(\bfn, \bfa)       \Biggl( &  q^m\Bigl|  \Bigl\{    \bff \in \mathbb F_q[T] ^{r+s} \mid f_i\textrm{ monic}, \deg(f_i) = n_i  , \prod_{i=1}^{r+s} f_i = T^{ n } +  O( T^{n - m-1 } ) \Bigr \}\Bigr|  \\- & q^{m-1} \Bigl|  \Bigl\{    \bff \in \mathbb F_q[T]^{r+s}  \mid f_i\textrm{ monic}, \deg(f_i) = n_i  , \prod_{i=1}^{r+s} f_i = T^{ n } +  O( T^{n - m } ) \Bigr \}\Bigr| \Biggr) .\end{split}\end{equation}

We have \begin{equation}   \Bigl|  \Bigl\{    \bff \in \mathbb F_q[T]^{r+s} \mid f_i\textrm{ monic},  \deg(f_i) = n_i  , \prod_{i=1}^{r+s} f_i = T^{ n } +  O( T^{n - m } ) \Bigr \}\Bigr|  = q^{ n + (s-1) (m-1)}   \end{equation}  
because there is always exactly one value of $f_{r+s}$ that satisfies the equation for any $f_1,\dots,f_{r+s-1}$. Hence we can simplify \eqref{mm-double} 
\begin{equation}\label{mm-simplified} \hspace{-.5in} \begin{split} \sum_{\psi \in S'_{m,q} }&\epsilon_\psi^s \prod_{i=1}^r L(1/2 + \alpha_i, \psi) \\ = \sum_{\bfn \in \{0,\dots, m-1\}^r} & Q(\bfn, \bfa)       \Biggl(  q^m\Bigl|  \Bigl\{    \bff \in \mathbb F_q[T]^{r+s} \mid f_i\textrm{ monic},  \deg(f_i) = n_i  , \prod_{i=1}^{r+s} f_i = T^{ n } +  O( T^{n - m-1 } ) \Bigr \}\Bigr|  -  q^{n + s (m-1)} \Biggr) .\end{split}\end{equation}

The term in parantheses in \eqref{mm-simplified} is $q^m$ times the left side of Theorem~\ref{main} with $k=r+s$ and $c_1,\dots,c_m=0$. Applying Theorem~\ref{main}, we see that in each term with $n + s(m-1) \geq m$, we have \begin{equation}\label{main-individual} \begin{split} &\Biggl|  q^m\Bigl|  \Bigl\{    \bff \in \mathbb F_q[T]^{r+s} \mid f_i\textrm{ monic},  \deg(f_i) = n_i  , \prod_{i=1}^{r+s} f_i = T^{ n } +  O( T^{n - m-1 } ) \Bigr \}\Bigr|  -  q^{n + s (m-1)}  \Biggr|\\ \leq & 3 (r+s+2)^{n + s(m-1) +m}q^{\frac{1}{2}\left( n + s(m-1) +m + \lfloor \frac{n + s (m-1)}{p} \rfloor - \lfloor \frac{m}{p} \rfloor  +1\right)} . \end{split} \end{equation}

The product of \eqref{main-individual} with $Q(\bfn, \bfa) $ is at most \[ 3 (r+s+2)^{n + s(m-1) +m} q^{\frac{1}{2}\left( m + \lfloor \frac{n + s (m-1)}{p} \rfloor - \lfloor \frac{m}{p} \rfloor  +1\right)}. \] Summing over $\bfn$, we get \begin{equation}\label{mm-error} 3 m^r (r+s+2)^{r+s (m-1) +m}q^{\frac{1}{2}\left(  m + \lfloor \frac{ (r+s) (m-1)}{p} \rfloor - \lfloor \frac{m}{p} \rfloor  +1\right) }. \end{equation}
 
 The only term where $ n+ s(m-1) < m$ occurs when $s=1$ and $n_1,\dots,n_r=0$. This term is simply $q^{ (-m-1)/2} (q^m - q^{m-1})$ and is bounded by  \eqref{mm-error}. Thus, we obtain  \eqref{mm-main}.

\end{proof}

\bibliographystyle{plainnat}

\bibliography{references}

 \end{document}